\documentclass[12pt,reqno]{amsart}

\usepackage[T1]{fontenc}
\usepackage{graphicx}
\usepackage{multirow}
\usepackage{amsmath,amssymb,amsfonts}
\usepackage{amsthm,mathtools}
\usepackage{lmodern}
\usepackage{microtype}
\usepackage{mathrsfs}
\usepackage[title]{appendix}
\usepackage{xcolor}
\usepackage{textcomp}
\usepackage{manyfoot}
\usepackage{booktabs}
\usepackage{algorithm}
\usepackage{algorithmicx}
\usepackage{algpseudocode}
\usepackage{listings}
\usepackage[hidelinks]{hyperref}
\hypersetup{pdfauthor={Alexander Panchenko, Ben Hellwig, Oleh Rudenko},pdftitle={Isoperimetric-Combinatorial Bounds for Range-Controlled Matchings and Quasi-Interpolation from Scattered Data},bookmarksnumbered=true}

\theoremstyle{plain}%
\newtheorem{theorem}{Theorem}
\newtheorem{proposition}{Proposition}%
\newtheorem{lemma}{Lemma}
\newtheorem{corollary}{Corollary}

\theoremstyle{remark}%
\newtheorem{example}{Example}%
\newtheorem{remark}{Remark}%

\theoremstyle{definition}%
\newtheorem{definition}{Definition}%
\newtheorem{assumption}{Assumption}

\newcommand{\R}{\mathbb{R}}
\newcommand{\cl}[1]{\overline{#1}}
\newcommand{\Thick}{\mathcal{F}}
\newcommand{\cL}{\mathcal{L}}
\newcommand{\diam}{\operatorname{diam}}
\newcommand{\Qone}{[-1,1]^d}

\newcommand{\Jhat}{\widehat J}
\newcommand{\nbar}{\overline n}
\newcommand{\pbar}{\overline p}

\newcommand{\N}{\mathbb{N}_0}
\newcommand{\Z}{\mathbb{Z}}
\newcommand{\PP}{\mathcal{P}}
\newcommand{\F}{\mathcal{F}}
\newcommand{\one}{\mathbf 1}

\title[Controlled Matchings for Quasi-Interpolation]{Isoperimetric-Combinatorial Bounds for Range-Controlled Matchings and Quasi-Interpolation from Scattered Data}

\author{Alexander Panchenko}
\email{panchenko@wsu.edu}
\address{Department of Mathematics and Statistics, Washington State University, PO Box 643113, Pullman, 99163, USA}

\author{Ben Hellwig}
\email{benjamin.hellwig@wsu.edu}
\address{Department of Mathematics and Statistics, Washington State University, PO Box 643113, Pullman, 99163, USA}

\author{Oleh Rudenko}
\email{oleh.rudenko@wsu.edu}
\address{Department of Mathematics and Statistics, Washington State University, PO Box 643113, Pullman, 99163, USA}

\begin{document}
	
	\begin{abstract}
		
        We develop a mesoscopic framework for analyzing perturbations of finite point sets. 
		Given a reference node set $Y$ with known cubature and approximation properties, we consider a disordered
		node set $Q$ that is observed only through its populations in cubes at scale $r>0$.
		By imposing Hall-type (HT) combinatorial constraints on these populations, we prove the existence
		of a perfect matching between $Y$ and $Q$ with $O(r)$ range.
		This allows integral approximation estimates on coarser cubes at scale $h>r$ to be transferred from $Y$ to $Q$ with explicit error control and anchors
		$Q$ to a periodic grid. We then use translation-invariant quasi-interpolation methods to obtain high-order estimates
		of order $h^s$ as in the quasi-uniform setting, but for a different class of geometries. The key restrictions are the HT conditions and  the bound $r\le Ch$, where $C<1$ is scale independent.

	\end{abstract}

\subjclass[2020]{Primary 41A63, 41A25; Secondary 41A35, 05C70, 05D05, 49Q22.}

\keywords{Scattered data approximation, Range-controlled matchings, Hall-type conditions, Quasi-interpolation, Local cubature, Translation-invariant operators, Mesoscopic point-set geometry, Optimal transport}

\maketitle

\section{Introduction}

Let
\(U\subset \R^d\) be a bounded domain satisfying the cone and extension hypotheses needed for local polynomial approximation.  In the main construction \(U\) is a cube, or a finite union of axis-aligned cubes. 
We consider a set of distinct reference sites
\[
Y=\{y_1,\dots,y_N\}
\]
chosen for favorable sampling and integration behavior, and an actual disordered point set (sample)
\[
Q=\{q_1,\dots,q_N\}\subset U.
\]
The set $Q$ is produced by an external mechanism and is not known a priori. The distinctive feature of the paper is that we do not assume access to exact point locations of $Q$. 
Instead, the available information is mesoscopic: we assume the populations of $Q$ are known within cubic cells in a partition of $U$, where the cells in this partition have diameter comparable to a prescribed scale $r>0$.

Let
$
\mathcal{C}=\{C_j\}_{j=1}^J,\diam(C_j)\le r,
$
be a  partition of $U $ by a union of axis-aligned cubes $C_j$. The reference populations
$
n_j:=|Y\cap C_j|
$
are known, because $Y$ is known. The observed populations
$p_j:=|Q\cap C_j|$
provide the available description of $Q$. 

The first problem is geometric and combinatorial: find conditions on the population vectors $p, n$ that imply existence of a bijection
\(\tau:Y\to Q\) with \(\|\tau(y)-y\|_\infty=O(r)\).  For empirical measures of equal mass, this is the same as an \(O(r)\) bound for the \(W_\infty\)-distance; see \cite{Villani2009,Santambrogio2015}.

The matching criterion proved in Theorem~\ref{thm:matching} is a Hall-type condition written in terms of the sorted population vectors and a minimal perimeter-cardinality function attached to the cubic partition, or equivalently, to the scaled integer lattice. Thus, pointwise geometric information about \(Q\) is replaced by knowledge of mesoscopic populations. The perimeter-cardinality function measures the smallest number of additional cubes in a one-layer thickening of a set of cubes of given cardinality. The corresponding discrete isoperimetric problems in product grids are related to compression methods and projection inequalities; see \cite{BOLLOBAS199147,veomett2012,loomiswhitney1949,LevRudnev2018}.

Hall's theorem gives the necessary and sufficient conditions for existence of perfect matchings in bipartite graphs \cite{Hall1935}.  Strassen's theorem gives the corresponding coupling criterion for probability measures with a prescribed support relation \cite{Strassen1965}; in Dudley's proof, Strassen's theorem is obtained explicitly from Hall's theorem \cite{Dudley1968}.

Under existing settings, the exact Hall-Strassen conditions must be checked for arbitrary mesocube subsets of $U$, which is exponential in $J$.  In contrast, the HT conditions introduced in this paper replace the full set of Hall inequalities by a smaller family involving the upper partial sums of the components of \(p\) and the lower partial sums of the components of \(n\).  These are only sufficient conditions for matching, but they are explicit and comparatively easy to check. 

The HT conditions resemble discrepancy estimates because they compare total populations over nested increasing families of cubes (see, for example, \cite{Matousek1999,DickPillichshammer2010}).  They also resemble majorization inequalities, since the partial sums of the largest components of \(p\) are compared with the partial sums of the smallest components of \(n\); see \cite{HardyLittlewoodPolya,MarshallOlkinArnold}.  Although more restrictive than the full Hall conditions, our HT conditions are general enough to cover broad classes of point sets that inherit the favorable properties of low-discrepancy sequences and quasi-uniform sets, while being neither low-discrepancy nor quasi-uniform.  

The second problem is approximation from scattered data. Since the set $Q$ is not known pointwise, it is not realistic to base a numerical procedure on values $f(q_i)$ at unspecified sites. A more robust objective is to approximate local integrals of $f$ on coarse cubes, or on slightly larger inflated cubes, and then use these averages to form a piecewise-constant approximation that can be used for constructing a higher-order quasi-interpolant.

Combinatorial bounds in the HT conditions become the main component for developing error estimates. We partition the domain into coarse cubes \(C_\alpha\) of side length \(h>r\), and work locally on the inflated cubes
\[
        \Thick(C_\alpha)=C_\alpha+[-r,r]^d.
\]
On each such cube we approximate integral averages of a given function $f$ by constructing local cubature formulas that are exact for polynomials of degree at most \(s-1\).  When the full set $Q$ is available, these cubatures should use points of $Q$ as nodes.  If only mesoscopic information for $Q$ is known, we prove that $Q$ contains a stable skeleton provided the reference set $Y$ contains a stable cubature skeleton, the HT conditions hold for the pair $Q,Y$, and the scale ratio $r/h$ satisfies a mild smallness assumption. In the present framework, we do not have direct access to $Q$, so the only option is to use the known reference set $Y$ while controlling the resulting error increase. The stability of both cubatures is governed by a polynomial norming margin \(\Gamma_{s-1}\).  Lower bounds for this margin are obtained either from Vandermonde determinants or from metric-span/Remez-type conditions for discrete sets, following \cite{Yomdin2011,BrudnyiYomdin2016}.  

The final approximation operator is a translation-invariant stencil quasi-interpolant on a periodic coarse grid in which exact integral averages of $f$ are approximated by the scattered data cubatures. The polynomial reproduction property of such quasi-interpolants is standard and belongs to the line of spline and shift-invariant quasi-interpolation developed in \cite{DeBoorFix73,LycheSchumaker1975,ChuiDiamond1990,LeiJia1997,Jia2004,Jia2010}.  The approximation scale is \(h\), with disorder scale \(r\) entering through matching and local norming.

The main results are proved in three theorems:
\begin{enumerate}
\item Theorem~\ref{thm:matching} gives the HT population conditions for existence of a range-controlled matching between \(Y\) and \(Q\).  These matching conditions use estimates for a perimeter-cardinality function of the cubic partition.  
\item Theorem~\ref{thm:transfer-reference-cubature}  contains the proof of stability estimates for cubatures. 
\item Theorem~\ref{thm:QI-perturbed-reference} combines cubature replacement and a translation-invariant quasi-interpolation scheme, giving order \(h^s\) estimates under the scale condition \(r\le C h\).  The assumptions do not require global quasi-uniformity.  Local clustering is allowed if it does not prevent polynomial norming.  
\end{enumerate}

The ambient dimension $d$ and the approximation order $s$ are fixed in the asymptotic regime as $N\to\infty$. This is the same as in most work on cubature and quasi-interpolation from scattered data, and we do not attempt to overcome the curse of dimensionality.

The organization of the paper is as follows:  Section~2 defines the local cube geometry and the population data;  Section~3 proves the HT matching criterion and records the perimeter-cardinality estimates needed later;  Section~4 discusses feasible population vectors and examples, including links with low-discrepancy constructions;  Section~5 proves local cubature estimates on inflated cubes;  Sections~6 and 7 combine these estimates with translation-invariant quasi-interpolation and prove the final approximation theorems.

\section{Preliminaries: local cube geometry and population data}\label{sec:prelim-local}

In the combinatorial part (Sections~\ref{sec:matching}--\ref{sec:examples}) we work on a single coarse cube
of side length $h>0$ and a finer mesoscopic partition of side length $r$.

\subsection{A single coarse cube and its mesoscopic partition}\label{subsec:one-cube}
Fix $h>0$ and set
\[
\Omega:=[0,h]^d\subset\mathbb{R}^d.
\]
Fix a mesoscale $r\in(0,h]$ and, for simplicity, assume that $h/r\in\mathbb{N}$.
Partition $\Omega$ into axis-aligned congruent cubes of side length $r$:
\begin{equation}\label{1scale}
\Omega=\bigcup_{j\in I} C_j,\qquad C_j = x_j + [-r/2,r/2]^d,
\end{equation}
where the cubes have pairwise disjoint interiors and $J:=|I|=(h/r)^d$ with boundaries adjusted as needed according to one of the existing conventions to avoid boundary overlap.

\begin{definition}\label{def:thickening}
Define the {\it $r$-thickening} of $\Omega$ by
\[
\widehat\Omega:=\Thick(\Omega):=\Omega+[-r,r]^d.
\]
The partition \eqref{1scale} extends to a partition of $\widehat\Omega$ by congruent cubes of side length $r$; we write $\{\widehat C_j\}_{j\in\hat I}$ for this extended family  and denote
\[
\widehat\Omega=\bigcup_{j\in\hat I}\widehat C_j,
\]
where the cubes have pairwise disjoint interiors, with $\hat I\supset I$,
$\widehat C_j=C_j$ for $j\in I$. We also write $\hat J:=|\hat I|$ for the number of cubes in the thickened partition
(if $h/r\in\mathbb{N}$, then $\hat J=(h/r+2)^d$).
\end{definition}

\subsection{Point sets, populations, and the neighborhood map}\label{subsec:populations}
Let $Q=\{q_1,\dots,q_N\}\subset \Omega$ be the actual (disordered) point set and
let $Y=\{y_1,\dots,y_N\}\subset \widehat\Omega$ be a reference point set, with $|Q|=|Y|=N$.
For $j\in I$ and $j\in\hat I$ define the population counts
\[
p_j:=|Q\cap C_j|,\qquad j\in I,
\qquad\text{and}\qquad
n_j:=|Y\cap \widehat C_j|,\qquad j\in \hat I.
\]
Thus, $\sum_{j\in I} p_j=N$ and $\sum_{j\in\hat I} n_j=N$.

\begin{definition}\label{pointthickening}
For $q_i\in Q$, let $C(q_i)=C_j$ denote its partition cube.
Define the closed neighborhood of $j$ in the thickened partition by
\[
\mathcal N(j):=\{j\}\ \cup\ \bigl\{\,\ell\in \hat I:\ \cl{\widehat C_{\ell}}\cap\cl{C_j}\neq\varnothing\,\bigr\}.
\]
The multi-valued map $\chi:Q\to 2^{Y}$ is defined by
\[
\chi(q_i):= Y\ \cap\ \bigcup_{\ell\in \mathcal N(j)} \widehat C_{\ell},
\qquad\text{with }j\text{ such that }C(q_i)=C_j.
\]
\end{definition} 

Equivalently, $\chi$ defines a bipartite graph $G=(Q,Y,E)$ with edges
\begin{equation}\label{eq:G-def}
(y,q)\in E\qquad\Longleftrightarrow\qquad y\in\chi(q).
\end{equation}

\section{A Hall-type matching criterion from cube populations}\label{sec:matching}

\begin{definition}\label{def:phi}
For $m\in\{0,1,\dots,J\}$ define
\[
\phi(m):=\min\bigl\{\,|\mathcal{N}(I)\setminus I|:\ I\subset\{1,\dots,J\},\ |I|=m\,\bigr\}.
\]
Equivalently, $m+\phi(m)$ is the minimal possible size of a one-step neighborhood $\mathcal{N}(I)$ among all index sets $I$ of size $m$.
\end{definition}

The endpoint identity
\begin{equation}\label{eq:endpoint}
J+\phi(J)=\hat J
\end{equation}
reflects that $\mathcal{N}(\{1,\dots,J\})$ covers all cubes in the partition of $\widehat\Omega$.

\subsection{Hall-type inequalities and a perfect matching of range $2r$}

Consider the bipartite graph $G$ defined by \eqref{eq:G-def}. A perfect matching in $G$ yields a bijection $\tau:Y\to Q$ with $\|y-\tau(y)\|_\infty\le 2r$ for all $y\in Y$.

Let
\[
p_{[1]}\ge p_{[2]}\ge \cdots\ge p_{[J]}
\]
denote the decreasing rearrangement of the vector $p$, and let
\[
n_{(1)}\le n_{(2)}\le \cdots\le n_{(\hat J)}
\]
denote the increasing rearrangement of $n$.
We define the associated partial sums by
\begin{equation}\label{eq:partial-sums-sp-sn}
S_p(m):=\sum_{k=1}^{m} p_{[k]}, \;\;  0\le m\le J, \qquad
S_n(t):=\sum_{j=1}^{t} n_{(j)}, \;\; 0\le t\le \hat J,
\end{equation}
with the convention $S_p(0)=S_n(0)=0$.
Thus, $S_p(m)$ is the sum of the $m$ largest entries of $p$, while $S_n(t)$ is the sum of the $t$ smallest entries of $n$.
These partial sums are standard in majorization theory \cite{MarshallOlkinArnold,HardyLittlewoodPolya}; for non-negative vectors with fixed sum of components (fixed mass), larger upper partial sums correspond to a less uniform distribution.
In particular, a sparse non-negative vector is higher in the majorization order than a more diffuse vector because after decreasing rearrangement, a larger portion of its mass is carried by the first few components.

\begin{theorem}\label{thm:matching}\label{suffcon}
Let $\phi$ be the perimeter-cardinality function from Definition~\ref{def:phi}, and let
\begin{equation}\label{eq:phi-star}
\phi_*(m)\le \phi(m),\qquad m=0,1,\dots,J.
\end{equation}
Assume that the population vectors $n$ and $p$ satisfy the Hall-type (HT) inequalities
\begin{equation}\label{eq:hall-majorization}
S_n(\lceil m+\phi_*(m)\rceil)\ \ge\ S_p(m),\qquad m=0,1,\dots,J.
\end{equation}
In dimensions $d=2,3$, one may take $\phi_* = \phi$, where the exact formulas for $\phi$ are given in Proposition~\ref{prop:exact-profiles}.  In arbitrary dimension $d\ge2$, one may take the explicit lower bound
\begin{equation}
\label{eq:uni-bound}
\phi_*(m)= \bigl(m^{1/d}+2\bigr)^d-m,\qquad 1\le m\le J,
\end{equation}
and $\phi_*(0)=0$. Then the bipartite graph $G$ defined by \eqref{eq:G-def} admits a perfect matching. Thus, there exists a bijection $\tau:Y\to Q$ such that
$\|y-\tau(y)\|_\infty\le 2r$ for all $y\in Y$.
\end{theorem}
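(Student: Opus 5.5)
The plan is to reduce the statement to Hall's marriage theorem on the $Q$-side of the bipartite graph $G$ from \eqref{eq:G-def}. Since $|Q|=|Y|=N$, a $Q$-saturating matching is automatically perfect. So it suffices to verify $|\chi(A)|\ge |A|$ for every $A\subset Q$, where $\chi(A):=\bigcup_{q\in A}\chi(q)$.

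\emph{Reduction to cube sets.} Fix $A\subset Q$ and let $I_A\subset I$ be the set of indices $j$ with $A\cap C_j\neq\varnothing$, with $m:=|I_A|$.
\begin{itemize}
\item Since $A$ is contained in the union of $m$ cubes of the fine partition, $|A|\le \sum_{j\in I_A}p_j\le S_p(m)$.
\item By Definition~\ref{pointthickening}, $\chi(A)=Y\cap\bigcup_{\ell\in\mathcal N(I_A)}\widehat C_\ell$, where $\mathcal N(I_A)=\bigcup_{j\in I_A}\mathcal N(j)$. Because the cubes $\widehat C_\ell$ have disjoint interiors (with the boundary convention), $|\chi(A)|=\sum_{\ell\in\mathcal N(I_A)}n_\ell$.
\item A sum of $t$ entries of $n$ is at least the sum of the $t$ smallest entries. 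Hence $|\chi(A)|\ge S_n(|\mathcal N(I_A)|)$.
\item By Definition~\ref{def:phi}, $|\mathcal N(I_A)|\ge m+\phi(m)\ge m+\phi_*(m)$. Since $|\mathcal N(I_A)|$ is an integer, it is at least $\lceil m+\phi_*(m)\rceil$.
\item $S_n$ is nondecreasing because $n\ge0$. Combining with \eqref{eq:hall-majorization} gives
\[
|\chi(A)|\ge S_n(\lceil m+\phi_*(m)\rceil)\ge S_p(m)\ge |A|.
\]
\end{itemize}
Hall's theorem \cite{Hall1935} then yields the perfect matching.

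\emph{Range bound.} If $y\in\chi(q)$ with $q\in C_j$, then $y\in\widehat C_\ell$ for some $\ell\in\mathcal N(j)$. Closures of adjacent cubes touch, so the centres satisfy $\|x_\ell-x_j\|_\infty\le r$. Each point lies within $r/2$ of its cube's centre in $\|\cdot\|_\infty$, so $\|y-q\|_\infty\le r/2+r+r/2=2r$.

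\emph{Admissible choices of $\phi_*$.} For $d=2,3$, taking $\phi_*=\phi$ is trivially allowed, with the exact values supplied by Proposition~\ref{prop:exact-profiles}. For general $d$ I must check $\phi(m)\ge (m^{1/d}+2)^d-m$, which is the main step.
\begin{itemize}
\item Let $K=\bigcup_{j\in I_A}C_j$, so $\mathrm{vol}(K)=mr^d$.
\item Every point of $K+[-r,r]^d$ lies in a closed cube $\cl{\widehat C_\ell}$ that meets some $\cl{C_j}$ with $j\in I_A$. Moreover $K+[-r,r]^d\subset\widehat\Omega$. Hence $K+[-r,r]^d\subset\bigcup_{\ell\in\mathcal N(I_A)}\cl{\widehat C_\ell}$.
\item Comparing volumes, $|\mathcal N(I_A)|\,r^d\ge\mathrm{vol}(K+[-r,r]^d)$.
\item By Brunn--Minkowski, $\mathrm{vol}(K+[-r,r]^d)^{1/d}\ge (mr^d)^{1/d}+2r$.
\item Dividing by $r$ gives $|\mathcal N(I_A)|\ge(m^{1/d}+2)^d$, as needed.
\end{itemize}
The only delicate point I anticipate is the bookkeeping that $\mathcal N$ of the fine cubes really covers the full Minkowski thickening, including near $\partial\Omega$. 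This holds because $\widehat\Omega$ was defined as exactly $\Omega+[-r,r]^d$ with the extended partition.
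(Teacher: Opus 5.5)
Your proposal is correct and follows the paper's proof essentially step for step. The common steps are Hall's theorem on the $Q$-side, the reduction to the index set $I_A$ of occupied cubes, the bounds $|A|\le S_p(m)$ and $|\chi(A)|\ge S_n(\lceil m+\phi_*(m)\rceil)$ via integrality, and Brunn--Minkowski for the universal bound. The differences are cosmetic: you apply Brunn--Minkowski directly at scale $r$ using the inclusion $K+[-r,r]^d\subset\bigcup_{\ell\in\mathcal N(I_A)}\overline{\widehat C_\ell}$, where the paper rescales to the unit lattice and uses the identity $U(A)+[-1,1]^d=U(\mathcal F(A))$. You also verify the $2r$ matching range explicitly, which the paper only asserts.
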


\begin{proof}
\noindent
{\it Step 1: Existence of a matching}. 
By Hall's theorem it suffices to verify that for every $S\subset Q$ one has $|N_G(S)|\ge |S|$, where $N_G(S)\subset Y$
is the neighbor set in $G$.
 
Fix $S\subset Q$ and let $I\subset\{1,\dots,J\}$ be the set of indices of cubes intersecting $S$:
\[
I:=\{\,j:\ S\cap C_j\neq\varnothing\,\},\qquad m:=|I|.
\]
Then $|S|\le \sum_{j\in I} p_j\le S_p(m)$ by definition of $S_p$.

Since every $q\in S$ lies in $\bigcup_{j\in I} C_j$, any $y\in Y$ that lies in a cube of the one-step neighborhood $\mathcal{N}(I)$
satisfies $\|y-q\|_\infty\le 2r$ for some $q\in Q$ (because cubes in $\mathcal{N}(I)$ meet the $r$-thickening of $\bigcup_{j\in I} C_j$).
Therefore, $N_G(S)$ contains all points of $Y$ lying in the thickened cubes indexed by $\mathcal{N}(I)$, and hence
\[
|N_G(S)|\ \ge\ \sum_{j\in \mathcal{N}(I)} n_j.
\]
By \eqref{eq:phi-star} and Definition~\ref{def:phi}, $|\mathcal{N}(I)|\ge m+\phi(m)\ge \lceil m+\phi_*(m) \rceil$, and consequently
\[
\sum_{j\in \mathcal{N}(I)} n_j\ \ge\ \min_{\substack{T\subset\{1,\dots,\hat J\}\\ |T|=\lceil m+\phi_*(m)\rceil}}\ \sum_{j\in T} n_j
\ =\ S_n(\lceil m+\phi_*(m)\rceil).
\]
Combining the bounds gives
\[
|N_G(S)|\ \ge\ S_n(\lceil m+\phi_*(m)\rceil)\ \ge\ S_p(m)\ \ge\ |S|,
\]
where the middle inequality is \eqref{eq:hall-majorization}. Hall's theorem now yields a perfect matching.

\noindent{\it Step 2: Universal lower bound}. To prove \eqref{eq:uni-bound}, consider a finite set $A\subset\mathbb{Z}^d$ with $|A|=m>0$. Associate to $A$ the measurable set
\[
U(A):=\bigcup_{a\in A}\bigl(a+[0,1]^d\bigr)\subset\mathbb{R}^d,
\]
a union of unit cubes with pairwise disjoint interiors, so that the Lebesgue measure $\mathcal{L}^d(U(A))=|A|=m$. 
Moreover,
\[
U(A)+[-1,1]^d=U(\Thick(A)),
\]
since for each $a\in\mathbb{Z}^d$ one has
\[
\bigl(a+[0,1]^d\bigr)+[-1,1]^d=a+[-1,2]^d
=\bigcup_{\substack{b\in\mathbb{Z}^d\\ \|b-a\|_\infty\le 1}}\bigl(b+[0,1]^d\bigr).
\]
Therefore
\[
\mathcal{L}^d\bigl(U(A)+[-1,1]^d\bigr)=\mathcal{L}^d\bigl(U(\Thick(A))\bigr)=|\Thick(A)|.
\]
Now apply the Brunn-Minkowski inequality to the measurable sets $U(A)$ and $[-1,1]^d$:
\[
\mathcal{L}^d\bigl(U(A)+[-1,1]^d\bigr)^{1/d}
\ge
\mathcal{L}^d\bigl(U(A)\bigr)^{1/d}+\mathcal{L}^d([-1,1]^d)^{1/d}
= m^{1/d}+2.
\]
Raising both sides to the $d$-th power gives
\[
|\Thick(A)|=\mathcal{L}^d\bigl(U(A)+[-1,1]^d\bigr)\ge (m^{1/d}+2)^d,
\]
which gives \eqref{eq:uni-bound} after subtracting $m$ from both sides.

\end{proof}

\begin{remark}\label{rem:bm-bound}
Since the Lebesgue measures of $U(A),[-1,1]^d$ are positive with $[-1,1]^d$ convex, the Brunn-Minkowski inequality is exact if and only if the two sets in the Minkowski sum are homothetic, up to a set of measure $0$. Hence, the lower bound obtained above is exact for cube sets $S$ of cardinality $m=k^d$, where $k$ is a positive integer. In this case, $\Thick(S)$ is a cube of side $k+2$, and
\[
\phi(k^d)=(k+2)^d-k^d.
\]
Since the complexity of the exact formula for $\phi(m)$ is expected to grow rapidly with dimension, an explicit lower bound $\phi_*(m)$ is useful. 
\end{remark}

\begin{remark}\label{rem: permute}
The HT conditions are permutation invariant with respect to $p$. If some $p$ is feasible for given $n$, so is any $p^\prime$ obtained from $p$ by a permutation of components. 
\end{remark}

\begin{remark}\label{rem:range}
If $\phi_*(0)=0$ and $\phi_*(J)=\hat J-J$ (for example when $\phi_*=\phi$), then the inequalities \eqref{eq:hall-majorization} at $m=0$ and $m=J$ are tautological:
$S_n(0)=S_p(0)=0$ and $S_n(\hat J)=S_p(J)=N$.
Thus, only the indices $m=1,\dots,J-1$ can impose nontrivial constraints.
\end{remark}

\subsection{Exact low-dimensional minimal perimeter}\label{subsec:exact-low-dimensional-profiles}

Theorem \ref{thm:matching} is useful with any lower bound $\phi_*\leq \phi$, but for $d=2,3$, we present the exact minimal perimeter $\phi$ in Proposition \ref{prop:exact-profiles}. The proof uses results from Veomett-Radcliffe~\cite{veomett2012}, and we introduce relevant definitions and notation prior to statement of the proposition.

Denote $\N=\{0,1,2,\dots\}$ and let $d\in\{1,2,3\}$
be the ambient dimension. For integers $a\le b$, we write the \emph{integer
intervals}
\[
[a,b]=\{a,a+1,\dots,b\}\ (\,|[a,b]|=b-a+1\,),
\]
\[
[a,b)=\{a,\dots,b-1\}\ (\,|[a,b)|=b-a\,).
\]
A box $[0,s_1)\times\cdots\times[0,s_d)$ has \emph{side lengths} (point counts)
$s_1,\dots,s_d$. For finite $A\subset\Z^d$ the \emph{thickening} is defined similarly to Definition \ref{def:thickening} as the closed
$\ell^\infty$-neighborhood
\[
\F(A)=\{x\in\Z^d:\ \|x-a\|_\infty\le1\text{ for some }a\in A\},
\]
and the \emph{perimeter} is $\PP(A)=|\F(A)|-|A|$ \cite[\S2]{veomett2012}. Thus,
$\F([0,k])=[-1,k+1]$ in $\Z^1$, and $\PP([0,m))=2$ for all $m\ge1$
(with $\PP(0)=0$). 

We let \emph{the cube order $\prec$} be the Veomett-Radcliffe
well-ordering restricted to $\N^d$ \cite[\S2]{veomett2012} (VR order). On $\N^1$, this is the standard ordering $0<1<2<\cdots$. For $d>1$, letting
$M(u)=\max_j u_j$ and $i_u=\min\{j:u_j=M(u)\}$, one sets $u\prec v$ iff any of
the following three rules holds:
\begin{itemize}
\item R1: $M(u)<M(v)$;
\item R2: $M(u)=M(v)$ and $i_v<i_u$;
\item R3: $M(u)=M(v)$, $i_u=i_v$, and $u'\prec v'$, where $u',v'$ delete
coordinate $i_u$.
\end{itemize}
Let $I_d(m)$ be the length-$m$ initial segment in the VR order. We write $\PP_d(m):=\PP(I_d(m))$
for its perimeter, with the convention
$\PP_d(0)=\PP(\varnothing)=0$. By similar arguments to \cite[Thm.~1]{veomett2012}, initial
segments minimize $\PP$ among all subsets of $\N^d$ of the same cardinality, so in Proposition \ref{prop:exact-profiles} we derive a formula for $\PP_d(m)$, $d \in \{2,3\}$, which is equivalent to $\phi(m)$. 

For a given cardinality $m$, we will decompose $I_d(m)$ into the disjoint union of sets. Let $k$ be such that $k^d \le m < (k+1)^d$ and denote cube $B_d(k):=[0,k)^d$. By R1, $B_d(k) \subseteq I_d(m) \subsetneq B_d(k+1)$. Define \emph{shell} $Z_d(k):=\{x:M(x)=k\}$ so that $B_d(k) \sqcup Z_d(k) = B_d(k+1)$, and using the position of the first maximal
coordinate, decompose the shell into a disjoint union of \emph{faces} $Z_d(k)=\Phi_d(k)\ \sqcup\ \Phi_{d-1}(k)\ \sqcup\cdots\sqcup\ \Phi_1(k)$, where
\[
\Phi_j(k)=\{x:\ x_1,\dots,x_{j-1}<k,\ x_j=k\}=[0,k)^{j-1}\times\{k\}\times[0,k+1)^{d-j},
\]
with $|\Phi_j(k)|=k^{\,j-1}(k+1)^{\,d-j}$. Let $i\in \{0, 1, \dots , d-1\}$ be the index such that $m-k^d$ can be written in the form
\begin{equation}\label{eq:iR}
    m-k^d =\sum_{j=1}^i k^{d-j}(k+1)^{j-1} + R,
\end{equation}
where remainder $R\in [0,k^{d-i-1}(k+1)^i)$. By R2,
\[
B_d(k) \sqcup \Phi_d(k) \sqcup \cdots \sqcup \Phi_{d-i+1}(k) \subseteq I_d(m) \subseteq B_d(k) \sqcup \Phi_d(k) \sqcup \cdots \sqcup \Phi_{d-i}(k).
\]
Because $I_d(m)$ is an initial segment and by R3, $W_d(m):=I_d(m) \cap \Phi_{d-i}(k)$ is a length-$R$ initial segment in dimension $d-1$ under the VR order after deletion of coordinate $d-i$. We decompose
\[
I_d(m) = B_d(k) \sqcup \Phi_d(k) \sqcup \cdots \sqcup \Phi_{d-i+1}  \sqcup W_d(m) \subsetneq B_d(k+1).
\]

\begin{proposition}\label{prop:exact-profiles}
For every $m\ge1$, the minimal perimeter among all subsets of $\N^d$ of
cardinality $m$ is given by 
\[
\PP_d(m)=\PP(\mathfrak B)+\one[R>0]\,\PP_{d-1}(R),
\]
\[
\PP(\mathfrak B)=(k+2)^{\,d-i}(k+3)^{\,i}-k^{\,d-i}(k+1)^{\,i}, 
\]
where $i$ and $R$ are defined by \eqref{eq:iR} and the Iverson bracket $\one[\mathfrak p]$
equals $1$ if proposition $\mathfrak p$ holds and equals $0$ otherwise. In particular
\[
\PP_2(m)=4k+4+2i+2\,\one[R>0]\quad(i\in\{0,1\}),
\]
\[
\PP_3(m)=\PP(\mathfrak B)+\one[R>0]\,\PP_2(R),
\]
where for $d=3$ one has $\PP(\mathfrak B)=6k^2+12k+8,\ 6k^2+16k+12,$ or $6k^2+20k+18$ depending on $i=0,1,2$.

Here, $\mathfrak B = B_d(k) \sqcup \Phi_d(k) \sqcup \cdots \sqcup \Phi_{d-i+1}(k)$ is the box with $i$ side
lengths equal to $k+1$ and $d-i$ side lengths equal to $k$, where $R$ may also be expressed explicitly as $R=m-|\mathfrak B|$.

\end{proposition}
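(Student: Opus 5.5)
The plan is to split the proof into an optimality statement and an exact count. The optimality statement says that $\PP(A)\ge \PP(I_d(m))$ for every $A\subset\N^d$ with $|A|=m$. The exact count evaluates $\PP(I_d(m))$ through the decomposition $I_d(m)=\mathfrak B\sqcup W_d(m)$ recorded just before the proposition. After that, the explicit formulas for $d=2,3$ and the identification with $\phi(m)$ are routine bookkeeping.

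\emph{Step 1 (optimality).} I would invoke \cite[Thm.~1]{veomett2012}, which states that initial segments of the VR order minimize the $\ell^\infty$-vertex boundary among finite subsets of $\Z^d$. Since $\PP$ is translation invariant, any $A\subset\N^d$ is a subset of $\Z^d$, so no new argument is needed beyond checking that the VR order on $\Z^d$ restricts to $\prec$ on $\N^d$. It also follows that the initial segment lies in the box $B_d(k+1)$. Hence, when $m\le J$ and $k+1\le h/r$, it can be placed inside the grid indexed by $I$. Its thickening lies in the grid indexed by $\hat I$, because the thickened partition adds one layer of cubes on every side. Therefore $\phi(m)=\PP_d(m)$. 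The edge case $m=J$ is the full box and is covered by \eqref{eq:endpoint}.

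\emph{Step 2 (perimeter of the box part).} Up to a permutation of coordinates, $\mathfrak B$ is a product of integer intervals with $i$ side lengths $k+1$ and $d-i$ side lengths $k$. Its thickening is the product box with sides $k+3$ and $k+2$, which gives $\PP(\mathfrak B)=(k+2)^{d-i}(k+3)^i-k^{d-i}(k+1)^i$. If $R=0$, this is already the claimed formula.

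\emph{Step 3 (the partial face $W=W_d(m)$).} This is the step I expect to be the main obstacle, since the bookkeeping must be done carefully. Let $c=d-i$. Then $W$ lies in the hyperplane $x_c=k$, directly adjacent to $\mathfrak B$, which occupies $[0,k)$ in coordinate $c$. Let $W'$ denote $W$ with coordinate $c$ deleted; it is a length-$R$ initial segment in dimension $d-1$, by R3. I would first check that the projection of $W$ onto the remaining coordinates is contained in the projection of $\mathfrak B$. This holds because coordinates before $c$ are $<k$, and coordinates after $c$ are $<k+1$, matching the corresponding sides of $\mathfrak B$. Consequently $\F(W)$ agrees with $\F(\mathfrak B)$ in the layers $x_c\in\{k-1,k\}$. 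In the layer $x_c=k+1$, $\F(W)$ is a copy of $\F_{d-1}(W')$, and none of these points lie in $\F(\mathfrak B)$, which stops at $x_c=k$. Hence
\[
|\F(I_d(m))|=|\F(\mathfrak B)|+|\F_{d-1}(W')|=|\F(\mathfrak B)|+R+\PP_{d-1}(R).
\]
Subtracting $|I_d(m)|=|\mathfrak B|+R$ yields $\PP_d(m)=\PP(\mathfrak B)+\one[R>0]\,\PP_{d-1}(R)$.

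\emph{Step 4 (specializations).} For $d=2$, note that $\PP_1(R)=2$ for $R\ge1$. With $i=0$ we get $(k+2)^2-k^2=4k+4$, and with $i=1$ we get $(k+2)(k+3)-k(k+1)=4k+6$. For $d=3$, expanding $(k+2)^{3-i}(k+3)^i-k^{3-i}(k+1)^i$ for $i=0,1,2$ gives $6k^2+12k+8$, $6k^2+16k+12$, and $6k^2+20k+18$, as stated.
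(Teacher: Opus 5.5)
Your proposal is correct and follows the same route as the paper's proof. Optimality of initial segments is taken from Veomett--Radcliffe, the filled part $\mathfrak B$ is a box whose thickening is a box, and the partial face $W_d(m)$ adds only the slice at $x_{d-i}=k+1$, of size $R+\PP_{d-1}(R)$; the $d=2,3$ formulas then follow by direct expansion.

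Your projection check in Step 3 is a useful addition, since it justifies $\F(W_d(m))\cap\{x_{d-i}\le k\}\subseteq\F(\mathfrak B)$, which the paper leaves implicit; say ``is contained in'' rather than ``agrees with''. In Step 1, restricting an order to $\N^d$ does not by itself transfer optimality of initial segments: you also need $\PP(I_d(m))$ to equal the perimeter of the Veomett--Radcliffe optimizer of size $m$, a point the paper likewise only asserts.
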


\begin{proof}
Because initial
segments minimize $\PP$ among all subsets of $\N^d$ of the same cardinality, the perimeter of $I_d(m)$ is minimal with respect to $m$.

The cube $B_d(k)$ together with the $i$ completely filled faces is a box
\[
\mathfrak B=[0,k)^{\,d-i}\times[0,k+1)^{\,i}
\]
(each filled face lengthens one axis of the cube from $k$ to $k+1$). Its
thickening is again a box, $\F(\mathfrak B)=[-1,k]^{\,d-i}\times[-1,k+1]^{\,i}$, so the
perimeter is immediate:
\[
\PP(\mathfrak B)=|\F(\mathfrak B)|-|\mathfrak B|=(k+2)^{\,d-i}(k+3)^{\,i}-k^{\,d-i}(k+1)^{\,i}.
\]

It remains to add the perimeter contribution from the last partially filled face, $W_d(m)$. Because $B_d(k) \subseteq \mathfrak B \subseteq \mathfrak B \sqcup W_d(m) \subsetneq B_d(k+1)$, it follows that $W_d(m) \subset \mathcal{F}(\mathfrak B)$, and the only points of $\mathcal{F}(I_d(m)) = \mathcal{F}(\mathfrak B \sqcup W_d(m))$ outside $\mathcal{F}(\mathfrak B)$ are
\[
S:=\mathcal{F}(W_d(m)) \,  \cap \, [-1,k]^{\,d-i-1}\times\{k+1\} \times [-1,k+1]^{\,i},
\]
which is the $(d-1)$-dimensional "slice" of $\mathcal{F}(W_d(m))$ at the level $x_{d-i}=k+1$. Let $W_d(m)^+\subset S$ be the points of $W_d(m)$ with $k+1$ in coordinate $d-i$ instead of $k$. Since deletion of coordinate $d-i$ from $W_d(m)$ gives a length-$R$ initial segment in dimension $d-1$, the same is true for $W_d(m)^+$, which means $|S| = |W_d(m)^+| + P_{d-1}(R) = R + P_{d-1}(R)$. Because $W_d(m) \subset\mathcal{F}(\mathfrak B)$, we must subtract $R = |W_d(m)|$ from the $P(\mathfrak B)$ formula before adding $|S|$ to avoid counting $W_d(m)$ as part of the perimeter for $\mathfrak B \sqcup W_d(m)$. Thus, adding $W_d(m)$ raises the overall perimeter by $\PP_{d-1}(R)$, and
\[
\PP_d(m)=\PP(\mathfrak B)+\one[R>0]\,\PP_{d-1}(R),
\]
which yields the displayed formulas for $d=2,3$.
\end{proof}

\section{Feasibility and examples}\label{sec:examples}

The Hall-type conditions \eqref{eq:hall-majorization} relate the largest populations of $Q$ with the smallest available populations of $Y$. 
If the lower partial sums $S_n(t)$ decrease, then $S_n(\lceil m+\phi_*(m)\rceil)$
decrease as well, which forces upper partial sums $S_p(m)$ to become smaller.
A less uniform distribution of the reference populations $n$ forces the target populations $p$ to be more uniform.
When $n$ is close to uniform, the admissible region for $p$ is comparatively large and allows more concentrated population vectors.
When $n$ is strongly nonuniform, the inequalities permit only those $p$ whose mass is spread more evenly. At an extreme, the most non-uniform feasible $n$  would only allow one feasible $p$, and that $p$ must be as close as possible to uniform.

\begin{example} \label{ex:extreme}
We consider how much non-uniformity in $p$ is permitted for a uniform $n$. Set
\begin{equation}
\label{eq:uniform-n}
\mu:=\bar n=\frac{N}{\widehat J}\in\mathbb N.
\end{equation}
Then the Hall inequalities take the form
\[
S_p(m)\ \le\ \mu\Bigl\lceil \bigl(m^{1/d}+2\bigr)^d\Bigr\rceil,
\qquad 0\le m\le J.
\]
In particular,
\[
p_{(1)}=S_p(1)\le 3^d\mu.
\]
Thus, a single cube $C_j$ may carry as many as $3^d\mu$ points even though $n$ is exactly uniform. If the two largest components are equal with
\[
p_{(1)}=p_{(2)}=c\mu,
\]
then the bound at $m=2$ yields
\[
2c\mu\le \mu(2^{1/d}+2)^d,
\qquad
c\le \frac{(2^{1/d}+2)^d}{2}.
\]
Thus, the two largest components may both exceed the average value $\mu$ by a substantial factor depending only on $d$.  
\end{example}

\begin{example}
In the reverse situation, we consider uniform $p$ with 
$$
p_i=\pbar=\frac{N}{J},
$$
and let $n=(\nbar,\dots, \nbar) + \epsilon$, where $\nbar$ is the sample mean of the vector $n$ (see \eqref{eq:uniform-n}) and the perturbation $||\epsilon||_\infty\leq \Delta$. We determine the largest admissible $\Delta$ under the conditions of Theorem 1.

For a positive integer $k$, set 
$
J=k^d, \Jhat=(k+2)^d,
$ and, for $m=1,\ldots,J$, denote
$
 A_m=(m^{1/d}+2)^d.
$
For simplicity of computations, assume that the integer $\widehat J$ is even.

The "least favorable" choice of the sorted $n$ compatible with a given bound $\Delta$ and satisfying the normalization $\sum_i^{\Jhat} n_i=N$ is the vector whose first $\Jhat/2$ components are equal to $\nbar-\Delta$ while the remaining components are equal to $\nbar+\Delta$. Inserting this into HT conditions \eqref{eq:hall-majorization}
we find
\begin{equation}
        (\nbar-\Delta)A_m-\pbar m\geq 0
        \label{eq:case1}
\end{equation}
for $m\in[J]$ such that
$A_m\leq \Jhat/2$, and 
\begin{equation}
        (\nbar-\Delta)\frac{\Jhat}{2}
        +(\nbar+\Delta)\left(A_m-\frac{\Jhat}{2}\right)
        -\pbar m\geq 0
        \label{eq:case2}
\end{equation}
for $m\in[J]$ such that
$
 A_m\geq \Jhat/2.
$

Set $\Delta=\nbar\delta$. Given $A_m\le \Jhat/2$, solving \eqref{eq:case1} for $\delta$ yields 
\begin{equation}
        \delta\le
        \frac{J A_m-\Jhat m}{J A_m}.
        \label{eq:abbr-lower}
\end{equation}
For $\Jhat/2 \le A_m<\Jhat$ and $m<J$ one obtains from \eqref{eq:case2}
\begin{equation}
        \delta\le
        \frac{J A_m-\Jhat m}{J(\Jhat-A_m)}.
        \label{eq:abbr-upper}
\end{equation}
By Remark \ref{rem:range}, the endpoint $m=J$ gives equality, so we solve for $\delta$ given $m < J$.

By direct calculation, the right hand side of \eqref{eq:abbr-lower} is decreasing in $m$ 
while the right hand side of \eqref{eq:abbr-upper} is increasing in $m$.
Thus, the inequalities tighten for those $m$ for which $A_m$ is closest to $\Jhat/2$. Provided $m\in [1, J]$ is real valued, using calculus shows the largest admissible $\delta_{\max}$ corresponds to
$
        m_0=\left((k+2)2^{-1/d}-2\right)^d,
$
where $(m_0^{1/d}+2)^d=\Jhat/2$. Calculation then yields
\begin{equation}
        \delta_{\max}
        =
        1-
        \left[
        1-\frac{2}{k}\left(2^{1/d}-1\right)
        \right]^d
        =
        \frac{2d}{k}\left(2^{1/d}-1\right)+O(k^{-2}).
\end{equation}
The ceiling and the integer rounding only affect the lower-order terms. Since $k=h/r$, the corresponding absolute perturbation satisfies
\begin{equation}
        \Delta_{\max}
        =
        \left(2d(2^{1/d}-1)+o(1)\right)
        N\left(\frac{r}{h}\right)^{d+1}.
\end{equation}
At least two different regimes may be of interest here. In the first regime $h$ is fixed while the mesoscale $r(N)\to 0$ as $N\to\infty$. At the same time, $r(N)$ must be large enough compared to, say, a scale $hN^{-1/d}$ associated with a cubic lattice packing of $N$ points into the cube of side $\sim h$. 

The second regime is more directly tied to the cubature and quasi-interpolation framework used in subsequent sections. Let the fixed macroscopic cube $\Omega=L[0,1]^d$ be subdivided into cubes $C_\alpha$ of side length $h=h(N)$. Inside each $C_\alpha$ choose a reference set $Y^\alpha$ of cardinality $N_\alpha$, 
assume
$
        \min_\alpha N_\alpha\to\infty,
$
and partition each $C_\alpha$ into mesocubes of side length $r=\theta h$, where $0<\theta\le c<1$ is fixed. After affine rescaling $C_\alpha\to[0,1]^d$, the mesocube partition is fixed. Hence, the HT perturbation calculation gives an admissible component fluctuation of the form
\begin{equation}
\label{eq:regime2}
        \Delta_{\alpha,\max}
        =
        \Gamma(d,\theta,\eta_0)N_\alpha,
        \qquad
        \Gamma(d,\theta,\eta_0)>0,
\end{equation}
where $\eta_0\in(0,1)$ is a fixed margin parameter (see Section \ref{sec:local-cubature} for more details on the choice of $\eta_0$).

Many classical low-discrepancy sequences such as Sobol' sequences, Faure sequences, and Niederreiter-Xing sequences, satisfy the above perturbation bounds.  We recall here the  general theory of digital nets and digital sequences developed in the monograph of Dick and Pillichshammer \cite{DickPillichshammer2010}. A modern quasi-Monte Carlo survey is given by Dick, Kuo, and Sloan \cite{DickKuoSloan2013}.  The classical constructions themselves go back to Sobol' \cite{Sobol1967}, Faure \cite{Faure1982}, and Niederreiter-Xing \cite{NiederreiterXing1995}.

To connect low-discrepancy geometries with the HT conditions, recall that for a point set 
$
        X_N=\{x_1,\ldots,x_N\}\subset [0,1]^d 
$
and the anchored box with upper corner $t$ defined by 
$
        [0,t)=[0,t_1)\times\cdots\times[0,t_d),
$
the star discrepancy of $X_N$ is 
$
        D_N^*(X_N)
        =
        \sup_{t\in[0,1]^d}
        \left|
        \frac{1}{N}|(X_N\cap [0,t)|-t_1\cdots t_d
        \right|.
$
Assume that the reference sets under consideration satisfy the standard logarithmic estimate
\begin{equation}
        D_N^*(X_N)
        \le
        C_*\frac{(\log N)^d}{N}.
        \label{eq:star-bound}
\end{equation}

For an anchored box $B\subset[0,1]^d$, estimate \eqref{eq:star-bound} gives the counting estimate
$
        \left|
        |X_N\cap B|-N|B|
        \right|
        \le
        C_*(\log N)^d .
$
Since a general axis-parallel box $R\subset[0,1]^d$ is a signed sum of at most $2^d$ anchored boxes, 
\begin{equation}
        \left|
        |X_N\cap R|-N|R|
        \right|
        \le
        C_0(d)(\log N)^d,
        \label{eq:box-counting-error}
\end{equation}
where $C_0(d)$ depends only on $d$ and on the discrepancy constant in \eqref{eq:star-bound}. 
Let a fixed finite partition of $[0,1]^d$ into axis-parallel cubes $C_j$ be given. If $n_j$ is the number of points of $X_N$ in the $j$-th sub-cube and $\nbar_j$ is the corresponding uniform value, then \eqref{eq:box-counting-error} gives 
$
        |n_j-\nbar_j|
        \le
        C_0(d)(\log N)^d
        \label{eq:cell-fluctuation}
$
for every $C_j$ in the partition. We also assume that the number of cubes is fixed.

Then a low-discrepancy set $X_N$ is feasible under HT conditions in the first regime if
\begin{equation}
        (\log N)^d\leq C \left(Nr(N)^{d+1}\right),
        \label{eq:ld-compatibility}
\end{equation}
which clearly holds for a range of appropriate rate functions $r(N)$.

For the second regime of interest, the standard star-discrepancy estimate
implies that every mesocube count has absolute fluctuation at most
\begin{equation}
        C_0(d,\theta)(\log N_\alpha)^d .
\end{equation}
Combining this with the local HT upper bound on admissible fluctuations \eqref{eq:regime2} gives
\begin{equation}
        C_0(d,\theta)(\log N_\alpha)^d
        \le
        \Gamma(d,\theta,\eta_0)N_\alpha,
\end{equation}
which holds for all sufficiently large $N$ whenever $\min_\alpha N_\alpha\to\infty$.

If the local populations satisfy $N_\alpha\asymp N(h/L)^d$, the resulting admissible scale is
\[
        h(N)\to0,
        \quad
        Nh(N)^d\to\infty,
        \quad
        r(N)=\theta h(N),\quad 0<\theta\le c<1.
\]
Equivalently,
$
        N^{-1/d}\ll h(N)\ll 1.
$
If, for instance
$$
        h(N)=L N^{-1/d}(\log N)^{q/d},
        \qquad
        r(N)=\theta h(N),
        \qquad q>0,
$$
then $N_\alpha\asymp (\log N)^q$. The HT maximal allowed fluctuation is of order $(\log N)^q$, whereas the local low-discrepancy fluctuation is only $O((\log\log N)^d)$.

\end{example}

\section{Local cubature on inflated cubes}\label{sec:local-cubature}

We next consider local cubature of degree \(s-1\), $s\ge2$, on the inflated cubes
\[
\widehat C_\alpha:=\Thick(C_\alpha)=x^\alpha+[-\widetilde h/2,\widetilde h/2]^d,
\qquad \widetilde h:=h+2r.
\]
Existence and stability of local cubature will be established by bounding a degree-sensitive norming quantity, which can be estimated
either perturbatively from a reference set or directly from explicit local geometric conditions.

\subsection{Local normalization and the norming margin}

For each coarse cube \(C_\alpha=x^\alpha+[-h/2,h/2]^d\), define the affine normalization
\[
\begin{aligned}
S^\alpha(x)&:=\frac{2(x-x^\alpha)}{\widetilde h},\\
S^\alpha(\widehat C_\alpha)&=\Qone.
\end{aligned}
\]
For a finite set \(G^\alpha\subset \widehat C_\alpha\), write
\[
\overline{\overline{G^\alpha}}:=S^\alpha(G^\alpha)\subset\Qone.
\]

\begin{definition}
\label{def:Gamma}
For a finite set \(Z\subset\Qone\), define the {\it norming margin} as
\[
\Gamma_{s-1}(Z):=
\inf\Bigl\{
\max_{z\in Z}|p(z)|:\ p\in \Pi_{s-1}(\mathbb{R}^d),\
\|p\|_{L_\infty(\Qone)}=1
\Bigr\}.
\]
Its reciprocal
\[
N_{s-1}(Z):=\Gamma_{s-1}(Z)^{-1}
\]
is the associated norming constant.
\end{definition}

If \(Z_1\subset Z_2\subset\Qone\), then
$
\Gamma_{s-1}(Z_2)\ge \Gamma_{s-1}(Z_1).
$
Equivalently,
$
N_{s-1}(Z_2)\le N_{s-1}(Z_1).
$

A lower bound on a norming margin $\Gamma_{s-1}(Z)$ implies existence of stable cubature based on $Z$.
\begin{theorem}
\label{thm:local-cubature-gamma}
Fix \(s\in\mathbb{N}\). For a coarse cube \(C_\alpha\), let
\[
Q^\alpha=\{q_{\alpha,1},\dots,q_{\alpha,N_\alpha}\}\subset C_\alpha
\subset \widehat C_\alpha,
\]
and assume that the normalized local set
$
\overline{\overline{Q^\alpha}}:=S^\alpha(Q^\alpha)\subset\Qone
$
satisfies
\begin{equation}\label{eq:gamma-local}
\Gamma_{s-1}(\overline{\overline{Q^\alpha}})\ge \gamma_0>0.
\end{equation}
Then there exist weights
$
\lambda_{\alpha,1},\dots,\lambda_{\alpha,N_\alpha}\in\mathbb{R}
$
such that
\begin{equation}\label{eq:average-exactness}
\frac{1}{\cL^d(\widehat C_\alpha)}\int_{\widehat C_\alpha} p(x)\,dx
=
\sum_{\nu=1}^{N_\alpha}\lambda_{\alpha,\nu}\,p(q_{\alpha,\nu})
\qquad
\text{for all }p\in \Pi_{s-1}(\mathbb{R}^d),
\end{equation}
and
\begin{equation}\label{eq:l1-average-bound}
\sum_{\nu=1}^{N_\alpha}|\lambda_{\alpha,\nu}|
\le \gamma_0^{-1}.
\end{equation}
Equivalently, with
\[
w_{\alpha,\nu}:=\cL^d(\widehat C_\alpha)\lambda_{\alpha,\nu},
\]
one has
\begin{equation}\label{eq:integral-exactness}
\int_{\widehat C_\alpha} p(x)\,dx
=
\sum_{\nu=1}^{N_\alpha}w_{\alpha,\nu}\,p(q_{\alpha,\nu})
\qquad
\text{for all }p\in \Pi_{s-1}(\mathbb{R}^d),
\end{equation}
and
\begin{equation}\label{eq:l1-physical-bound}
\sum_{\nu=1}^{N_\alpha}|w_{\alpha,\nu}|
\le \cL^d(\widehat C_\alpha)\,\gamma_0^{-1}.
\end{equation}
Moreover, for every \(f\in C^s(\widehat C_\alpha)\),
\begin{equation}\label{eq:cubature-error}
\begin{aligned}
&\left|
\frac{1}{\cL^d(\widehat C_\alpha)}\int_{\widehat C_\alpha} f(x)\,dx
-
\sum_{\nu=1}^{N_\alpha}\lambda_{\alpha,\nu}f(q_{\alpha,\nu})
\right|\\
&\qquad\le
C(d,s)\,(1+\gamma_0^{-1})\,\widetilde h^s
\max_{|\beta|=s}\|D^\beta f\|_{L_\infty(\widehat C_\alpha)}.
\end{aligned}
\end{equation}
\end{theorem}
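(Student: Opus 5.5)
The plan is the standard norming-set argument: a Hahn--Banach extension to get the weights, then a Bramble--Hilbert/Taylor argument to get the error bound. We work throughout on the normalized cube $\Qone$ via $S^\alpha$. Averages are invariant under the affine map, and $\Pi_{s-1}$ is mapped to itself. Hence it suffices to produce weights $\lambda_\nu$ with
\[
\frac{1}{2^d}\int_{\Qone}p\,dz=\sum_\nu \lambda_\nu\,p(z_\nu),\qquad z_\nu=S^\alpha(q_{\alpha,\nu}),
\]
for all $p\in\Pi_{s-1}$, together with the bound $\sum_\nu|\lambda_\nu|\le\gamma_0^{-1}$.

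\textbf{Step 1 (existence of weights).} Consider the restriction map $R:\Pi_{s-1}\to\ell_\infty(Z)$, $Z=\overline{\overline{Q^\alpha}}$, given by $p\mapsto (p(z))_{z\in Z}$. By \eqref{eq:gamma-local}, $\max_Z|p|\ge\gamma_0\|p\|_{L_\infty(\Qone)}$. So $R$ is injective, and on its range $V=R(\Pi_{s-1})$ the functional
\[
\ell(Rp):=\frac{1}{2^d}\int_{\Qone}p
\]
is well defined. It satisfies $|\ell(Rp)|\le\|p\|_{L_\infty(\Qone)}\le\gamma_0^{-1}\|Rp\|_{\ell_\infty(Z)}$. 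By Hahn--Banach, $\ell$ extends to all of $\ell_\infty(Z)$ with the same norm $\le\gamma_0^{-1}$. Since $\ell_\infty(Z)^*=\ell_1(Z)$ is finite dimensional, the extension is represented by weights $\lambda_\nu$ with $\sum|\lambda_\nu|\le\gamma_0^{-1}$. This gives \eqref{eq:average-exactness} and \eqref{eq:l1-average-bound}. Multiplying by $\cL^d(\widehat C_\alpha)$ gives \eqref{eq:integral-exactness} and \eqref{eq:l1-physical-bound}. If $Z$ has repeated points we merge weights; this is harmless.

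\textbf{Step 2 (error estimate).} Fix $f\in C^s(\widehat C_\alpha)$ and let $T$ be its Taylor polynomial of degree $s-1$ at the center $x^\alpha$. The cube is convex, so Taylor's theorem with integral remainder applies on all of $\widehat C_\alpha$ and gives
\[
\|f-T\|_{L_\infty(\widehat C_\alpha)}\le C(d,s)\,\widetilde h^s\max_{|\beta|=s}\|D^\beta f\|_{L_\infty(\widehat C_\alpha)},
\]
using $\|x-x^\alpha\|_\infty\le\widetilde h/2$. By exactness on $T$, the quantity on the left of \eqref{eq:cubature-error} equals
\[
\frac{1}{\cL^d(\widehat C_\alpha)}\int_{\widehat C_\alpha}(f-T)\;-\;\sum_\nu\lambda_{\alpha,\nu}(f-T)(q_{\alpha,\nu}).
\]
This is bounded by $(1+\sum_\nu|\lambda_{\alpha,\nu}|)\,\|f-T\|_{L_\infty(\widehat C_\alpha)}$, which yields \eqref{eq:cubature-error}. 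This step needs only that the nodes lie in $\widehat C_\alpha$, which holds since $Q^\alpha\subset C_\alpha$.

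\textbf{Main obstacle.} The substantive step is Step 1: converting the norming inequality into an $\ell_1$ weight bound by duality. It has to be done with the correct norms, namely $L_\infty(\Qone)$ on the polynomial side and $\ell_\infty(Z)$ on the data side, so that the Hahn--Banach extension has norm exactly controlled by $\gamma_0^{-1}$. We also need the averaging functional to have norm at most $1$ in $L_\infty(\Qone)$; this is immediate because it is an average. Everything else is routine scaling.
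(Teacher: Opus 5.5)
Your proposal is correct and follows the paper's proof essentially step for step: the norming inequality gives $\|\Lambda\|\le\gamma_0^{-1}$ on the range of the evaluation map, which is then extended by Hahn--Banach and represented by $\ell_1$ weights, and the error bound comes from exactness on the degree-$(s-1)$ Taylor polynomial at the center, which yields the factor $(1+\sum_\nu|\lambda_{\alpha,\nu}|)\|f-P^\alpha\|_{L_\infty(\widehat C_\alpha)}$. The only difference is cosmetic: you pass to $\Qone$ before applying Hahn--Banach, whereas the paper works on $\widehat C_\alpha$ after invoking the normalization.
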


\begin{proof}
Let
\[
E^\alpha:\Pi_{s-1}(\mathbb{R}^d)\to\mathbb{R}^{N_\alpha},
\qquad
E^\alpha(p):=(p(q_{\alpha,1}),\dots,p(q_{\alpha,N_\alpha})).
\]
After affine normalization from \(\widehat C_\alpha\) to \(\Qone\), assumption \eqref{eq:gamma-local} gives
\[
\|E^\alpha(p)\|_{\ell_\infty}
\ge
\gamma_0\,\|p\|_{L_\infty(\widehat C_\alpha)}
\qquad
\text{for all }p\in \Pi_{s-1}(\mathbb{R}^d).
\]
Hence \(E^\alpha\) is injective and its inverse satisfies
$
\|(E^\alpha)^{-1}\|\le \gamma_0^{-1}.
$

Define a linear functional on the image space \(E^\alpha(\Pi_{s-1}(\mathbb{R}^d))\subset\mathbb{R}^{N_\alpha}\) by
\[
\Lambda^\alpha(E^\alpha p):=
\frac{1}{\cL^d(\widehat C_\alpha)}\int_{\widehat C_\alpha} p(x)\,dx.
\]
Then
$
|\Lambda^\alpha(E^\alpha p)|
\le \|p\|_{L_\infty(\widehat C_\alpha)}
\le \gamma_0^{-1}\|E^\alpha(p)\|_{\ell_\infty}.
$
Thus \(\|\Lambda^\alpha\|\le \gamma_0^{-1}\). By the Hahn-Banach theorem, \(\Lambda^\alpha\) extends to a linear functional on
all of \(\mathbb{R}^{N_\alpha}\) with the same norm. Since \((\mathbb{R}^{N_\alpha},\|\cdot\|_{\ell_\infty})^*=\ell_1^{N_\alpha}\),
there exist coefficients \(\lambda_{\alpha,\nu}\) such that
\[
\Lambda^\alpha(u)=\sum_{\nu=1}^{N_\alpha}\lambda_{\alpha,\nu}u_\nu
\qquad\text{for all }u\in\mathbb{R}^{N_\alpha},
\]
and
$
\sum_{\nu=1}^{N_\alpha}|\lambda_{\alpha,\nu}|
\le \gamma_0^{-1}.
$
Applying this to \(u=E^\alpha(p)\) gives \eqref{eq:average-exactness} and \eqref{eq:l1-average-bound}.
The equivalent form \eqref{eq:integral-exactness}--\eqref{eq:l1-physical-bound} follows by multiplying by \(\cL^d(\widehat C_\alpha)\).

For the error estimate, let \(P^\alpha\) be the Taylor polynomial of \(f\) of degree \(s-1\) at the center \(x^\alpha\) of \(C_\alpha\).
Because the cubature is exact on \(\Pi_{s-1}(\mathbb{R}^d)\),
\[
\frac{1}{\cL^d(\widehat C_\alpha)}\int_{\widehat C_\alpha}(f-P^\alpha)(x)\,dx
-
\sum_{\nu=1}^{N_\alpha}\lambda_{\alpha,\nu}(f-P^\alpha)(q_{\alpha,\nu})
\]
coincides with the left-hand side of \eqref{eq:cubature-error}. Hence
\begin{align*}
&\left|
\frac{1}{\cL^d(\widehat C_\alpha)}\int_{\widehat C_\alpha} f(x)\,dx
-
\sum_{\nu=1}^{N_\alpha}\lambda_{\alpha,\nu}f(q_{\alpha,\nu})
\right|\\
&\qquad\le
\left(1+\sum_{\nu=1}^{N_\alpha}|\lambda_{\alpha,\nu}|\right)
\|f-P^\alpha\|_{L_\infty(\widehat C_\alpha)}.
\end{align*}
The standard Taylor remainder estimate on a cube yields
\[
\|f-P^\alpha\|_{L_\infty(\widehat C_\alpha)}
\le
C(d,s)\,\widetilde h^s
\max_{|\beta|=s}\|D^\beta f\|_{L_\infty(\widehat C_\alpha)}.
\]
Using \eqref{eq:l1-average-bound} gives \eqref{eq:cubature-error}.
\end{proof}

\begin{remark}\label{rem:many-node-advantage}
Theorem~\ref{thm:local-cubature-gamma} uses the whole local set \(Q^\alpha\).
No preliminary reduction to exactly \(\dim\Pi_{s-1}(\mathbb{R}^d)\) nodes is required.
This is important for the direct theorem proved later: once the local set is norming, there exists an exact cubature with controlled \(\ell_1\)-norm of the weights although the weights need not be unique.
\end{remark}

\subsection{Lower bounds for $\Gamma_{s-1}$}

We record three sufficient conditions for the lower bound required in
Theorem~\ref{thm:local-cubature-gamma}. The first uses a nondegenerate Vandermonde minor,
the second uses the metric span as defined by Yomdin~\cite{Yomdin2011}, and the third is an explicit finite criterion derived from the second.

Let
\[
M:=\dim\Pi_{s-1}(\mathbb{R}^d)=\binom{s-1+d}{d}.
\]
For a set \(Z'=\{z_1,\dots,z_M\}\subset\Qone\), define the Vandermonde determinant
\[
\Delta_{s-1}(Z'):=\det\bigl(z_i^{\beta_j}\bigr)_{1\le i,j\le M}\,,
\]
where \(\{x^{\beta_j}:|\beta_j|\le s-1\}\) is the standard monomial basis on \(\Qone\), arranged in a fixed order.

\begin{proposition}\label{prop:det-gamma}
Let \(Z\subset\Qone\) be finite. Assume that \(Z\) contains a subset \(Z'\) of cardinality \(M\) such that
\[
|\Delta_{s-1}(Z')|\ge \eta_0>0.
\]
Then
\begin{equation}\label{eq:gamma-det}
\Gamma_{s-1}(Z)\ge \Gamma_{s-1}(Z')\ge \frac{\eta_0}{M\,M!}.
\end{equation}
\end{proposition}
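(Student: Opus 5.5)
The first inequality is immediate from the monotonicity of $\Gamma_{s-1}$ under inclusion, recorded just after Definition~\ref{def:Gamma}: since $Z'\subset Z$, every $p\in\Pi_{s-1}(\mathbb{R}^d)$ satisfies $\max_{z\in Z}|p(z)|\ge \max_{z\in Z'}|p(z)|$. It therefore remains to bound $\Gamma_{s-1}(Z')$ from below. The plan is to write an arbitrary $p\in\Pi_{s-1}(\mathbb{R}^d)$ with $\|p\|_{L_\infty(\Qone)}=1$ in Lagrange form with respect to $Z'$:
\[
p=\sum_{i=1}^M p(z_i)\,\ell_i .
\]
Here $\ell_i$ denotes the Lagrange basis determined by the nonsingular Vandermonde matrix $V=(z_i^{\beta_j})$. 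This gives
\[
1=\|p\|_{L_\infty(\Qone)}\le \Bigl(\max_i|p(z_i)|\Bigr)\sum_{i=1}^M\|\ell_i\|_{L_\infty(\Qone)} .
\]
Consequently $\Gamma_{s-1}(Z')\ge\bigl(\sum_i\|\ell_i\|_\infty\bigr)^{-1}$, and it suffices to prove $\|\ell_i\|_{L_\infty(\Qone)}\le M!/\eta_0$ for each $i$.

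For that bound I would use Cramer's rule. For $x\in\Qone$, $\ell_i(x)=\Delta_{s-1}(Z'_i(x))/\Delta_{s-1}(Z')$, where $Z'_i(x)$ is $Z'$ with $z_i$ replaced by $x$. This is the standard identity: the numerator is a polynomial in $x$ of degree at most $s-1$ that vanishes at $z_k$ for $k\ne i$ and equals $\Delta_{s-1}(Z')$ at $x=z_i$. Every entry of the modified matrix is a monomial $w^{\beta_j}$ evaluated at a point $w\in\Qone$, so each entry has absolute value at most $1$. Expanding the determinant over permutations then bounds its absolute value by $M!$, and with $|\Delta_{s-1}(Z')|\ge\eta_0$ we get $|\ell_i(x)|\le M!/\eta_0$.

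Summing over the $M$ indices gives $\sum_i\|\ell_i\|_\infty\le M\,M!/\eta_0$, which yields \eqref{eq:gamma-det}. There is no real obstacle here. The only points that need care are the justification of the Cramer representation of $\ell_i$ and the observation that the normalization to $\Qone$ is exactly what makes all monomial entries bounded by $1$. One could sharpen $M!$ using Hadamard's inequality, $M^{M/2}$, but the crude permutation bound already matches the stated constant.
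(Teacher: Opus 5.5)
Your proof is correct and essentially matches the paper's. The paper gets the first inequality from monotonicity under inclusion. For the second, it cites Proposition~2.1 and Corollary~2.2 of Brudnyi--Yomdin for $N_{s-1}(Z')\le M\,M!/|\Delta_{s-1}(Z')|$, which is exactly the Lebesgue-constant bound you derive self-containedly from the Lagrange basis, Cramer's rule, and the permutation bound $M!$ on determinants with entries of modulus at most $1$ on $\Qone$.
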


\begin{proof}
It is enough to consider  a subset \(Z'\) of cardinality $M$.
Proposition~2.1 and Corollary~2.2 of Brudnyi-Yomdin
\cite{BrudnyiYomdin2016} give
\[
N_{s-1}(Z')\le \frac{M\,M!}{|\Delta_{s-1}(Z')|},
\]
and hence \eqref{eq:gamma-det} after taking reciprocals.
\end{proof}

For \(Z\subset\Qone\), let \(M(Z,\varepsilon)\) denote the covering number in the \(\ell_\infty\)-metric.
Let \(M_{s-1,d}(\varepsilon)\) be the corresponding Vitushkin polynomial--a polynomial in \(\varepsilon^{-1}\) of degree at most \(d-1\) with coefficients depending only on \(d\) and \(s\); see \cite{Yomdin2011,BrudnyiYomdin2016}. We note that Vitushkin polynomials are independent of the choice of the point set
$Z$.

Following Yomdin \cite{Yomdin2011}, define the metric span by
\begin{equation}\label{eq:metric-span-def}
\omega_{s-1,d}(Z):=
\sup_{\varepsilon>0}\varepsilon^d\bigl(M(Z,\varepsilon)-M_{s-1,d}(\varepsilon)\bigr).
\end{equation}

\begin{proposition}\label{prop:metric-span-gamma}
Let \(Z\subset\Qone\) be finite or compact, and suppose that
\[
\omega_{s-1,d}(Z)\ge \omega_0>0.
\]
Then
\begin{equation}\label{eq:gamma-metric}
\Gamma_{s-1}(Z)\ge R_{s-1,d}(\omega_0)^{-1},
\end{equation}
where
\begin{equation}\label{eq:remez-constant}
R_{s-1,d}(\omega):=
T_{s-1}\!\left(
\frac{1+(1-\omega)^{1/d}}{1-(1-\omega)^{1/d}}
\right),
\end{equation}
and \(T_{s-1}\) is the Chebyshev polynomial of degree \(s-1\).
\end{proposition}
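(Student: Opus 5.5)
The plan is to reduce the statement to the Remez-type inequality for polynomials on subsets of positive metric span, due to Yomdin~\cite{Yomdin2011} and in the form of Brudnyi--Yomdin~\cite{BrudnyiYomdin2016}. I will not rederive that inequality. It states that for a compact $Z\subset\Qone$ and every $p\in\Pi_{s-1}(\R^d)$,
\[
\|p\|_{L_\infty(\Qone)}\le R_{s-1,d}\bigl(\omega_{s-1,d}(Z)\bigr)\,\sup_{z\in Z}|p(z)|,
\]
with $R_{s-1,d}$ as in \eqref{eq:remez-constant}. This is the classical Remez inequality, $\|p\|_{B}\le T_{s-1}\bigl(\tfrac{1+(1-\lambda)^{1/d}}{1-(1-\lambda)^{1/d}}\bigr)\|p\|_{Z}$ with $\lambda=\mathcal L^d(Z)/\mathcal L^d(B)$, except that the Lebesgue measure ratio is replaced by the metric span $\omega_{s-1,d}(Z)$. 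That replacement is precisely Yomdin's theorem, and it is what makes the bound meaningful for finite sets, whose Lebesgue measure is zero.

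\textbf{Step 1 (monotonicity of $R$).} I will check that $R_{s-1,d}(\omega)$ is nonincreasing in $\omega\in(0,1]$. The map $\omega\mapsto(1-\omega)^{1/d}$ is decreasing. The map $t\mapsto(1+t)/(1-t)$ is increasing on $[0,1)$, so the Chebyshev argument decreases as $\omega$ increases, and it stays $\ge1$. Finally, $T_{s-1}$ is increasing on $[1,\infty)$. Together with $\omega_{s-1,d}(Z)\ge\omega_0$, this gives $R_{s-1,d}(\omega_{s-1,d}(Z))\le R_{s-1,d}(\omega_0)$.

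I also need to note that $\omega_{s-1,d}(Z)\le 1$ in the normalization used, so that the formula makes sense. If the normalization of the covering number on $\Qone$ is off by a factor $2^d$, I will adjust by rescaling $\Qone$ to the unit cube, exactly as Yomdin does.

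\textbf{Step 2 (from Remez to $\Gamma$).} Fix $p\in\Pi_{s-1}(\R^d)$ with $\|p\|_{L_\infty(\Qone)}=1$. Combining the Remez inequality with Step 1 gives
\[
1\le R_{s-1,d}(\omega_0)\,\max_{z\in Z}|p(z)|.
\]
When $Z$ is finite or compact, the supremum over $Z$ is attained, so it is a maximum. Taking the infimum over all such $p$ in Definition~\ref{def:Gamma} yields \eqref{eq:gamma-metric}.

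\textbf{Main obstacle.} The delicate point is matching conventions with \cite{Yomdin2011,BrudnyiYomdin2016}. Three things must agree: the $\ell_\infty$ covering number, the cube ($\Qone$ versus $[0,1]^d$), and the exact form of the Vitushkin polynomial $M_{s-1,d}$. I will first verify that the quoted Remez bound holds verbatim for $\Qone$ with the definition \eqref{eq:metric-span-def}. If the reference is stated on the unit cube, the fix is an affine change of variables. Such a change preserves both $\Pi_{s-1}$ and sup norms, and it rescales $\varepsilon$ uniformly, so $\omega$ is unchanged up to the volume normalization already built into the definition.
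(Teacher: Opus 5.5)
Your proposal follows the paper's proof, which is simply a citation of Theorem~2.5 in Brudnyi--Yomdin~\cite{BrudnyiYomdin2016}, and your Step~1 is correct: since $R_{s-1,d}$ is nonincreasing in $\omega$, the cited bound with $\omega_{s-1,d}(Z)$ implies the one with $\omega_0$, a step the paper leaves implicit. One caution is that the definition \eqref{eq:metric-span-def} is not literally invariant under the affine map $\Qone\to[0,1]^d$: the factor $\varepsilon^d$ picks up $2^d$, and $M_{s-1,d}(\varepsilon)$ does not rescale compatibly. The convention issue you flag should therefore be settled by taking the metric span, with its Vitushkin polynomial and volume normalization, on the cube used in the cited theorem, rather than by asserting invariance.
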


\begin{proof}
This is given by Theorem~2.5 in Brudnyi-Yomdin \cite{BrudnyiYomdin2016}.
\end{proof}

For a finite set \(Z\subset\Qone\), define
\[
\operatorname{sep}_\infty(Z):=\min\{\|z-z'\|_\infty:\ z,z'\in Z,\ z\ne z'\}.
\]

\begin{corollary}\label{cor:finite-criterion}
Let \(Z\subset\Qone\) be finite and denote \(\varepsilon_0:=\operatorname{sep}_\infty(Z)\).
If
\begin{equation}\label{eq:finite-vitu}
|Z| > M_{s-1,d}(\varepsilon_0),
\end{equation}
then \(\omega_{s-1,d}(Z)>0\), and therefore
\[
\Gamma_{s-1}(Z)
\ge R_{s-1,d}(\omega_{s-1,d}(Z))^{-1}>0.
\]
In particular, Theorem~\ref{thm:local-cubature-gamma} applies to \(Z\).
\end{corollary}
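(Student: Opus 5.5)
The plan is to show that the supremum in the definition \eqref{eq:metric-span-def} of $\omega_{s-1,d}(Z)$ is strictly positive by evaluating it at one well-chosen scale. After that, Proposition~\ref{prop:metric-span-gamma} gives the norming bound and Theorem~\ref{thm:local-cubature-gamma} gives the cubature.

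First I compute the covering number just below the separation scale. Since $Z$ is finite and $\varepsilon_0=\operatorname{sep}_\infty(Z)>0$, any two distinct points of $Z$ are at $\ell_\infty$-distance at least $\varepsilon_0$. I fix the covering convention used by Yomdin: coverings by closed $\ell_\infty$-balls of radius $\varepsilon$, or by sets of diameter at most $\varepsilon$. Under either convention, for $\varepsilon$ slightly smaller than $\varepsilon_0$ (for instance $\varepsilon<\varepsilon_0/2$ for balls of radius $\varepsilon$, or $\varepsilon<\varepsilon_0$ for sets of diameter $\varepsilon$), no covering element can contain two points of $Z$. Hence $M(Z,\varepsilon)=|Z|$ for all such $\varepsilon$.

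Second, I use the strict inequality $|Z|>M_{s-1,d}(\varepsilon_0)$ together with continuity of $\varepsilon\mapsto M_{s-1,d}(\varepsilon)$ on $(0,\infty)$. This map is a polynomial in $\varepsilon^{-1}$, so it is continuous there. Therefore there is $\varepsilon_1<\varepsilon_0$, as close to $\varepsilon_0$ as needed, with $|Z|>M_{s-1,d}(\varepsilon_1)$. If the covering convention only makes $M(Z,\varepsilon)=|Z|$ valid for $\varepsilon<\varepsilon_0/2$, I instead adjust by a convention-dependent constant factor. Reading $\varepsilon_0$ as the separation in the metric matching Yomdin's convention, I choose $\varepsilon_1$ just below the threshold where points become indistinguishable. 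The substantive requirement is only that hypothesis \eqref{eq:finite-vitu} holds at a scale where $M(Z,\cdot)=|Z|$, plus continuity.

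At this scale,
\[
\omega_{s-1,d}(Z)\ge \varepsilon_1^d\bigl(|Z|-M_{s-1,d}(\varepsilon_1)\bigr)>0 .
\]
Setting $\omega_0:=\omega_{s-1,d}(Z)$, Proposition~\ref{prop:metric-span-gamma} then gives $\Gamma_{s-1}(Z)\ge R_{s-1,d}(\omega_0)^{-1}$. This bound is positive because $R_{s-1,d}(\omega_0)$ is a finite Chebyshev value whenever $0<\omega_0\le 1$. To see $\omega_0\le1$: when $|Z|\le M_{s-1,d}(\varepsilon)$ the term in \eqref{eq:metric-span-def} is nonpositive, and otherwise $\varepsilon^d M(Z,\varepsilon)\lesssim 1$ in $\Qone$, up to the normalization built into Yomdin's definition. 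So the quantity is well defined, and Theorem~\ref{thm:local-cubature-gamma} applies with $\gamma_0=R_{s-1,d}(\omega_0)^{-1}$.

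The main obstacle is the first step: matching the covering-number convention (ball radius versus diameter) to the factor in $\operatorname{sep}_\infty$. That matching determines whether $M(Z,\varepsilon)=|Z|$ holds on a left neighborhood of $\varepsilon_0$ itself. I would fix the diameter convention, under which the statement is exactly right as written, and use continuity of the Vitushkin polynomial to pass from $\varepsilon_0$ to $\varepsilon_1<\varepsilon_0$.
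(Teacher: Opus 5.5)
Your proof is correct, but it handles the key step differently from the paper. The paper does not prove $\omega_{s-1,d}(Z)>0$ itself: it quotes Proposition~3.5 of \cite{Yomdin2011} and then invokes Proposition~\ref{prop:metric-span-gamma}, which is also your last step. You instead derive positivity directly from \eqref{eq:metric-span-def}. Below the separation scale no covering element contains two points of $Z$, so $M(Z,\varepsilon)=|Z|$. Continuity of the Vitushkin polynomial then transfers the strict inequality \eqref{eq:finite-vitu} to some $\varepsilon_1<\varepsilon_0$. Letting $\varepsilon_1\uparrow\varepsilon_0$, your argument also gives the explicit bound $\omega_{s-1,d}(Z)\ge\varepsilon_0^d\bigl(|Z|-M_{s-1,d}(\varepsilon_0)\bigr)$. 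Proposition~\ref{prop:metric-span-gamma} can therefore be applied with an explicit $\omega_0$. This quantitative form is more than the paper extracts from the citation, and it is the kind of control that uniformity in $\alpha$ in case (3) of Corollary~\ref{cor:QI-direct-explicit} actually requires. What the citation buys is that the covering convention and the normalizations of $M_{s-1,d}$ and $R_{s-1,d}$ are all inherited from a single source.

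Two parts of your write-up should be tightened.

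First, the covering convention is not a free choice. Your argument proves the statement exactly when $M(Z,\varepsilon)$ counts sets of $\ell_\infty$-diameter at most $\varepsilon$, i.e.\ cubes of side $\varepsilon$. This must be the convention in which $M_{s-1,d}$ and Proposition~\ref{prop:metric-span-gamma} are formulated. With balls of radius $\varepsilon$, the same argument would need $|Z|>M_{s-1,d}(\varepsilon_0/2)$. Your proposed \emph{adjustment by a convention-dependent constant factor} cannot extract this from \eqref{eq:finite-vitu}: $M_{s-1,d}$ grows as $\varepsilon$ decreases, so $M_{s-1,d}(\varepsilon_0/2)$ is in general larger than $M_{s-1,d}(\varepsilon_0)$. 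So drop the hedge and state the side-$\varepsilon$ convention explicitly.

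Second, your justification of $\omega_0\le 1$ via $\varepsilon^d M(Z,\varepsilon)\lesssim 1$ is not a proof. It is also unnecessary, since you take Proposition~\ref{prop:metric-span-gamma} as given, exactly as the paper does.
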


\begin{proof}
By Proposition~3.5 of Yomdin \cite{Yomdin2011}, condition \eqref{eq:finite-vitu} implies that \(\omega_{s-1,d}(Z)>0\).
Then Proposition~\ref{prop:metric-span-gamma} yields the stated lower bound.
\end{proof}

\begin{remark}
\label{rem:finite-criterion-sparse}
The powers of \(\varepsilon_0\)
in \(M_{s-1,d}(\varepsilon_0)\) given in the above Corollary~\ref{cor:finite-criterion} are less restrictive than the cardinality of a cubic grid
\(\varepsilon_0\)-packing of \(\Qone\).
Indeed, the threshold in \eqref{eq:finite-vitu} has order at most \(\varepsilon_0^{-(d-1)}\),
whereas a periodic cubic packing with separation comparable to \(\varepsilon_0\) has cardinality of order \(\varepsilon_0^{-d}\).
Consequently, the criterion applies to families that are sparser than a full periodic packing.
For example, any family satisfying \(|Z|\gtrsim \varepsilon_0^{-q}\) for some \(q>d-1\) will satisfy
\eqref{eq:finite-vitu} once \(\varepsilon_0\) is sufficiently small.
In particular, the class of sets covered by the corollary is nonempty in the cube $[-1,1]^d$:
one may choose a separation scale \(\varepsilon_*(d,q)>0\), depending only on \(d\) and \(q\),
for which there exist such sets \(Z\subset\Qone\).
After rescaling to a cube of side length \(h\), this produces a reference separation of order $h\varepsilon_*(d,q)$.
\end{remark}

\subsection{Perturbative transfer from a reference set}
The following lemma allows transfer of a lower bound for \(\Gamma_{s-1}\)
from a reference set to a corresponding matched set with controllable loss.

\begin{lemma}
\label{lem:gamma-perturb}
There exists a constant \(\mathfrak{M}(d,s)>0\), depending only on \(d\) and \(s\), such that the following holds:
Let \(Y,Q\subset\Qone\) be finite sets of the same cardinality, and assume that there is a bijection
\[
\tau:Y\to Q
\]
with
\[
\max_{y\in Y}\|\tau(y)-y\|_\infty\le \delta.
\]
Then
\begin{equation}\label{eq:gamma-perturb}
\Gamma_{s-1}(Q)\ge \Gamma_{s-1}(Y)-\mathfrak{M}(d,s)\,\delta.
\end{equation}
In particular, if
\[
\delta\le \frac{\Gamma_{s-1}(Y)}{2\mathfrak{M}(d,s)},
\]
then
\[
\Gamma_{s-1}(Q)\ge \tfrac12\Gamma_{s-1}(Y).
\]
\end{lemma}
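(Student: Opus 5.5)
The plan is to combine a Markov-type inequality for polynomials on the cube with the mean value theorem. The constant $\mathfrak{M}(d,s)$ will be a Markov constant for gradients of polynomials of degree at most $s-1$ on $\Qone$.

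\emph{Step 1 (Markov bound).} By equivalence of norms on the finite-dimensional space $\Pi_{s-1}(\mathbb{R}^d)$, there is a constant $\mathfrak{M}(d,s)$ such that
\[
\sup_{x\in\Qone}\|\nabla p(x)\|_1\le \mathfrak{M}(d,s)\,\|p\|_{L_\infty(\Qone)}
\qquad\text{for all } p\in\Pi_{s-1}(\mathbb{R}^d).
\]
Explicitly, one may take $\mathfrak{M}(d,s)=d(s-1)^2$. This follows from the univariate Markov inequality applied along each coordinate line: such a line meets $\Qone$ in a segment of length $2$, and the restriction of $p$ to it is a univariate polynomial of degree at most $s-1$. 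Either form of the constant suffices here.

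\emph{Step 2 (pointwise comparison).} Fix $p$ with $\|p\|_{L_\infty(\Qone)}=1$, and fix $y\in Y$ with $q=\tau(y)$. Both $y$ and $q$ lie in the convex set $\Qone$, so the segment joining them does too. The mean value theorem then gives
\[
|p(q)-p(y)|\le \sup_{\Qone}\|\nabla p\|_1\,\|q-y\|_\infty\le \mathfrak{M}(d,s)\,\delta .
\]
Hence $|p(\tau(y))|\ge |p(y)|-\mathfrak{M}\delta$ for every $y\in Y$.

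\emph{Step 3 (conclusion).} Choose $y^*\in Y$ that maximizes $|p|$ over $Y$. By Definition~\ref{def:Gamma}, $|p(y^*)|\ge\Gamma_{s-1}(Y)$. Since $\tau$ maps $Y$ into $Q$,
\[
\max_{q\in Q}|p(q)|\ge |p(\tau(y^*))|\ge \Gamma_{s-1}(Y)-\mathfrak{M}(d,s)\,\delta .
\]
Taking the infimum over all normalized $p$ yields \eqref{eq:gamma-perturb}.

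The second claim is then arithmetic: if $\delta\le \Gamma_{s-1}(Y)/(2\mathfrak{M})$, the loss $\mathfrak{M}\delta$ is at most $\tfrac12\Gamma_{s-1}(Y)$, so $\Gamma_{s-1}(Q)\ge\tfrac12\Gamma_{s-1}(Y)$.

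Note that only the fact that $\tau$ maps $Y$ into $Q$ is used; bijectivity is never needed. The only step with real content is Step 1, the Markov bound on the cube with a constant independent of the point sets. If the coordinatewise Markov argument proves awkward to state cleanly, the fallback is the abstract compactness argument: the unit sphere of $\Pi_{s-1}(\mathbb{R}^d)$ in the $L_\infty(\Qone)$ norm is compact, and $p\mapsto\sup_{\Qone}\|\nabla p\|_1$ is continuous on it, so it attains a finite maximum. This gives the constant without an explicit value.
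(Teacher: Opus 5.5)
Your proof is correct and follows the same route as the paper: a Markov-type bound for $\nabla p$ on $\Qone$, then the mean value theorem along the segment from $y$ to $\tau(y)$, then the infimum over normalized $p$. Your version is somewhat more precise than the paper's. You pair the $\ell_1$ norm of the gradient with the $\ell_\infty$ displacement, and you give the explicit constant $d(s-1)^2$, which is positive since $s\ge 2$ in this section. Your remark that only $\tau(Y)\subset Q$ is used, not bijectivity, is also accurate.
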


\begin{proof}
Let \(p\in \Pi_{s-1}(\mathbb{R}^d)\) satisfy \(\|p\|_{L_\infty(\Qone)}=1\).
By the Markov brothers' inequality on the cube,
\[
\|\nabla p\|_{L_\infty(\Qone)}\le \mathfrak{M}(d,s).
\]
Hence, for every \(y\in Y\),
$
|p(\tau(y))-p(y)|\le \mathfrak{M}(d,s)\,\delta,
$
so
$
\max_{q\in Q}|p(q)|\ge \max_{y\in Y}|p(y)|-\mathfrak{M}(d,s)\,\delta.
$
Taking the infimum over all such \(p\) gives \eqref{eq:gamma-perturb}.
\end{proof}

To prove the main perturbative result of this section, let standard monomial basis
\[
\Psi=(x^{\beta_1},\dots,x^{\beta_M})
\qquad (|\beta_j|\le s-1)
\]
on \(\Qone\) be listed in one fixed order, and let
\[
b_j:=2^{-d}\int_{\Qone}x^{\beta_j}\,dx,
\qquad
b=(b_1,\dots,b_M)^T.
\]

\begin{theorem}
\label{thm:transfer-reference-cubature}
For coarse cube \(C_\alpha\) with side length $h>0$, suppose that:
\begin{enumerate}
\item there is a reference skeleton
\[
(Y^\alpha)_{\mathrm{sk}}=\{y_{\alpha,1},\dots,y_{\alpha,M}\}\subset Y^\alpha;
\]
\item the normalized skeleton
\[
\overline{\overline{Y^\alpha}}_{\mathrm{sk}}:=S^\alpha((Y^\alpha)_{\mathrm{sk}})=\{\overline{\overline{y}}_{\alpha,1},\dots,\overline{\overline{y}}_{\alpha,M}\}\subset \Qone
\]
satisfies
\[
\bigl|\Delta_{s-1}(\overline{\overline{Y^\alpha}}_{\mathrm{sk}})\bigr|\ge \eta_0>0;
\]
\item there is a bijection \(\tau^\alpha:Y^\alpha\to Q^\alpha\) such that
\[
\|\tau^\alpha(y)-y\|_\infty\le 2r
\qquad\text{for all }y\in Y^\alpha;
\]
\item the ratio $r/h$ satisfies scale separation condition
\begin{equation}\label{eq:scale-separation}
\frac{r}{h}\le \frac{1}{\frac{4MM!(s-1)}{\eta_0\kappa}-2},
\end{equation}
where $\kappa\in (0,1)$ is fixed such that $\kappa < 2MM!(s-1)/\eta_0$.
\end{enumerate}
Define the matched skeleton on the physical sample by
\[
(Q^\alpha)_{\mathrm{sk}}:=\{q_{\alpha,1},\dots,q_{\alpha,M}\},
\qquad
q_{\alpha,\nu}:=\tau^\alpha(y_{\alpha,\nu}),
\]
and write
\[
\overline{\overline{q}}_{\alpha,\nu}:=S^\alpha(q_{\alpha,\nu}),
\qquad
\overline{\overline{Q^\alpha}}_{\mathrm{sk}}:=\{\overline{\overline{q}}_{\alpha,1},\dots, \overline{\overline{q}}_{\alpha,M}\}.
\]
Let
\[
V^{\alpha,Y}:=\bigl(\overline{\overline{y}}_{\alpha,\nu}^{\beta_j}\bigr)_{1\le \nu,j\le M}\,,
\qquad
V^{\alpha,Q}:=\bigl(\overline{\overline{q}}_{\alpha,\nu}^{\beta_j}\bigr)_{1\le \nu,j\le M}
\]
be the normalized Vandermonde matrices. Then the following conclusions hold:
\begin{enumerate}
\item Both matrices are invertible. The coefficient vectors
\[
    \lambda^{\alpha,Y}:=(V^{\alpha,Y})^{-T}b,
    \qquad
    \lambda^{\alpha,Q}:=(V^{\alpha,Q})^{-T}b
    \]
    define cubature rules
    \[
    \widetilde A^{\alpha,Y}(f):=\sum_{\nu=1}^M\lambda_\nu^{\alpha,Y}f(y_{\alpha,\nu}),
    \quad
    \widetilde A^{\alpha,Q}(f):=\sum_{\nu=1}^M\lambda_\nu^{\alpha,Q}f(q_{\alpha,\nu})
    \]
which are exact on \(\Pi_{s-1}(\mathbb{R}^d)\) for the inflated average
\[
A^\alpha(f):=\frac{1}{\cL^d(\widehat C_\alpha)}\int_{\widehat C_\alpha}f(x)\,dx.
\]
Moreover,
\begin{equation}\label{eq:Y-weight-l1}
\|\lambda^{\alpha,Y}\|_{\ell_1}\le \frac{M\,M!}{\eta_0},
\qquad
\|\lambda^{\alpha,Q}\|_{\ell_1}\le \frac{M\,M!}{\eta_0(1-\kappa)}.
\end{equation}
\item The cubature weights satisfy the perturbative estimate
\begin{equation}\label{eq:YQ-weight-difference}
\|\lambda^{\alpha,Q}-\lambda^{\alpha,Y}\|_{\ell_1}
\le
M^2(s-1)\frac{(M!)^2}{\eta_0^2(1-\kappa)}\frac{4r}{\widetilde h}.
\end{equation}
\item For every \(f\in C^s(\widehat C_\alpha)\),
\begin{equation}\label{eq:Y-cubature-error}
\left|A^\alpha(f)-\widetilde A^{\alpha,Y}(f)\right|
\le
C(d,s,\eta_0)\,\widetilde h^s
\max_{|\beta|=s}\|D^\beta f\|_{L_\infty(\widehat C_\alpha)},
\end{equation}
\begin{equation}\label{eq:Q-cubature-error}
\left|A^\alpha(f)-\widetilde A^{\alpha,Q}(f)\right|
\le
C(d,s,\eta_0)\,\widetilde h^s
\max_{|\beta|=s}\|D^\beta f\|_{L_\infty(\widehat C_\alpha)},
\end{equation}
and
\begin{equation}\label{eq:YQ-cubature-difference}
\left|\widetilde A^{\alpha,Y}(f)-\widetilde A^{\alpha,Q}(f)\right|
\le
C(d,s,\eta_0)\,r\,\widetilde h^{s-1}
\max_{|\beta|=s}\|D^\beta f\|_{L_\infty(\widehat C_\alpha)}.
\end{equation}
\end{enumerate}
\end{theorem}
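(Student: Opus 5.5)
The argument is a perturbation of the normalized Vandermonde system: use the $\ell_\infty\to\ell_\infty$ operator norm, i.e.\ the max-row-sum norm $\|\cdot\|_{\infty}$ on $M\times M$ matrices. Since $\lambda=V^{-T}b$, we have $\|\lambda\|_{\ell_1}=\|V^{-T}b\|_{\ell_1}\le \|V^{-1}\|_\infty\|b\|_{\ell_\infty}$, and $\|b\|_{\ell_\infty}\le 1$ because $|x^{\beta_j}|\le1$ on $\Qone$. Everything therefore reduces to bounding $\|(V^{\alpha,Y})^{-1}\|_\infty$ and $\|(V^{\alpha,Q})^{-1}\|_\infty$. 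For $V^{\alpha,Y}$, Cramer's rule gives $(V^{-1})_{j\nu}=\pm\det(\text{minor})/\Delta$. Each minor is an $(M-1)\times(M-1)$ determinant with entries bounded by $1$, hence bounded by $(M-1)!$. A row sum then gives $\|(V^{\alpha,Y})^{-1}\|_\infty\le M\cdot(M-1)!/\eta_0\le MM!/\eta_0$; this is the same Brudnyi--Yomdin bound invoked in Proposition~\ref{prop:det-gamma}, so I will simply cite it. Exactness is immediate: for $p=\sum_j c_j x^{\beta_j}$ composed with $S^\alpha$, we have $\sum_\nu\lambda_\nu p(\bar{\bar y}_\nu)=\lambda^T V c=b^Tc$, which equals the normalized average, since $S^\alpha$ maps $\widehat C_\alpha$ affinely onto $\Qone$ and averages are preserved.

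The next step is to control $E:=V^{\alpha,Q}-V^{\alpha,Y}$. After normalization, $\|\bar{\bar q}_\nu-\bar{\bar y}_\nu\|_\infty\le 2\cdot 2r/\widetilde h=4r/\widetilde h$. Each monomial of degree at most $s-1$ has gradient with $\ell_1$ norm at most $s-1$ on $\Qone$, so every entry of $E$ is at most $(s-1)4r/\widetilde h$, and $\|E\|_\infty\le M(s-1)4r/\widetilde h$. Now impose the scale condition \eqref{eq:scale-separation}. Writing $\widetilde h=h+2r$, it rearranges to $4r/\widetilde h\le \eta_0\kappa/(MM!(s-1))$; one should check that the constant in \eqref{eq:scale-separation} matches this exactly, and if it does not, adjust by a factor of $M$. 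This yields $\|(V^{\alpha,Y})^{-1}E\|_\infty\le\kappa<1$. A Neumann series then gives invertibility of $V^{\alpha,Q}$ together with $\|(V^{\alpha,Q})^{-1}\|_\infty\le \|(V^{\alpha,Y})^{-1}\|_\infty/(1-\kappa)$, which proves \eqref{eq:Y-weight-l1}.

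For \eqref{eq:YQ-weight-difference}, use the resolvent identity $(V^{Q})^{-1}-(V^{Y})^{-1}=-(V^{Q})^{-1}E(V^{Y})^{-1}$, transpose, and apply it to $b$. Bounding the $\ell_1$ norm by the product of the three operator norms gives
\[
\frac{MM!}{\eta_0(1-\kappa)}\cdot M(s-1)\frac{4r}{\widetilde h}\cdot\frac{MM!}{\eta_0}.
\]
Reconciling this exactly with the stated $M^2(s-1)(M!)^2/(\eta_0^2(1-\kappa))$ is the main bookkeeping obstacle. The natural route is to take the bound $(M-1)!$ on the minors, i.e.\ $\|V^{-1}\|\le M!/\eta_0$, in one of the factors. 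Only the shape of the constant matters downstream, so I would state it with whichever admissible constant the norm conventions produce.

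Part (3) then follows. Estimates \eqref{eq:Y-cubature-error} and \eqref{eq:Q-cubature-error} are the Taylor argument from the proof of Theorem~\ref{thm:local-cubature-gamma}, using the $\ell_1$ weight bounds of part (1); these depend only on $d,s,\eta_0$ because $\kappa$ is fixed. For \eqref{eq:YQ-cubature-difference}, let $P$ be the degree-$(s-1)$ Taylor polynomial at $x^\alpha$ and put $g=f-P$. Both rules are exact on $P$, so the difference equals $\sum_\nu(\lambda^Q_\nu-\lambda^Y_\nu)g(q_\nu)+\sum_\nu\lambda^Y_\nu\bigl(g(q_\nu)-g(y_\nu)\bigr)$. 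The first sum is at most $\|\lambda^Q-\lambda^Y\|_{\ell_1}\,\|g\|_\infty\lesssim (r/\widetilde h)\,\widetilde h^s$. For the second, the mean value theorem with $\|\nabla g\|_\infty\lesssim \widetilde h^{s-1}\max_{|\beta|=s}\|D^\beta f\|$, which is Taylor's estimate for the first derivatives, and $|q_\nu-y_\nu|\le 2r$ give a bound $\lesssim r\widetilde h^{s-1}$. Adding the two yields \eqref{eq:YQ-cubature-difference}.
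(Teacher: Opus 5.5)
Your route is the paper's: Cramer's rule for $(V^{\alpha,Y})^{-1}$, the entrywise bound $|u^{\beta_j}-v^{\beta_j}|\le (s-1)\|u-v\|_\infty$, a Neumann series under \eqref{eq:scale-separation}, and a perturbation formula for the inverse. The paper writes $V^{\alpha,Q}=V^{\alpha,Y}(I+E^\alpha)$ and bounds $(I+E^\alpha)^{-T}-I$; your additive resolvent identity is equivalent. Part (3) uses the same Taylor splitting.

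However, the factor-of-$M$ difficulty you flag is self-inflicted, and as written one step actually fails. It must be repaired, not hedged.

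\emph{The duality step is wrong.} The inequality $\|V^{-T}b\|_{\ell_1}\le\|V^{-1}\|_\infty\|b\|_{\ell_\infty}$ is false (take $V=I$, $b=(1,\dots,1)$). The correct duality is $\|V^{-T}\|_{\ell_1\to\ell_1}=\|V^{-1}\|_\infty$. Hence $\|\lambda\|_{\ell_1}\le\|V^{-1}\|_\infty\|b\|_{\ell_1}$, with $\|b\|_{\ell_1}\le M$.

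\emph{The loosening breaks the Neumann step.} Your Cramer computation already gives $\|(V^{\alpha,Y})^{-1}\|_\infty\le M\,(M-1)!/\eta_0=M!/\eta_0$. You replace this by $MM!/\eta_0$, which is the Brudnyi--Yomdin bound for the norming constant $N_{s-1}$, a different quantity satisfying $N_{s-1}\le M\|V^{-1}\|_\infty$. With that weaker bound, your Neumann step yields only $\|(V^{\alpha,Y})^{-1}E\|_\infty\le M\kappa$, which need not be $<1$. You are not free to ``adjust by a factor of $M$'': the hypothesis \eqref{eq:scale-separation} is fixed, and it is calibrated exactly to the sharp bound.

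\emph{The fix.} Keep $\|(V^{\alpha,Y})^{-1}\|_\infty\le M!/\eta_0$. Then the rearranged condition $M(s-1)\frac{M!}{\eta_0}\frac{4r}{\widetilde h}\le\kappa$ gives exactly $\|(V^{\alpha,Y})^{-1}E\|_\infty\le\kappa$ for your $E=V^{\alpha,Q}-V^{\alpha,Y}$. Hence $\|(V^{\alpha,Q})^{-1}\|_\infty\le M!/(\eta_0(1-\kappa))$. Multiplying once by $\|b\|_{\ell_1}\le M$ gives both bounds in \eqref{eq:Y-weight-l1}.

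For the weight difference, write $\lambda^{\alpha,Q}-\lambda^{\alpha,Y}=-(V^{\alpha,Y})^{-T}E^{T}\lambda^{\alpha,Q}$. Then $\|\lambda^{\alpha,Q}-\lambda^{\alpha,Y}\|_{\ell_1}\le \frac{M!}{\eta_0}\cdot M(s-1)\frac{4r}{\widetilde h}\cdot\frac{MM!}{\eta_0(1-\kappa)}$, which is exactly \eqref{eq:YQ-weight-difference}.

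The rule is: every operator norm of an inverse Vandermonde matrix carries $M!/\eta_0$ (times $(1-\kappa)^{-1}$ for $Q$), and the single factor $M$ from $b$ is spent exactly once. With this correction the rest of your argument, including part (3), goes through as you describe.
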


\begin{proof}
Rearranging \eqref{eq:scale-separation} gives
\begin{equation}\label{eq:small-r-condition}
M(s-1)\frac{M!}{\eta_0}\frac{4r}{\widetilde h}\le \kappa,
\end{equation}
where $\widetilde{h} = h+2r$. Set
$
\delta_\alpha:=\max_{1\le \nu\le M}\|\overline{\overline{q}}_{\alpha,\nu}-\overline{\overline{y}}_{\alpha,\nu}\|_\infty.
$
Since \(S^\alpha\) rescales distances by the factor \(2/\widetilde h\), the matching bound gives
\begin{equation}\label{eq:delta-alpha-bound}
\delta_\alpha\le \frac{4r}{\widetilde h}.
\end{equation}

Every cofactor of \(V^{\alpha,Y}\) has absolute value at most \((M-1)!\) since \(|\overline{\overline{y}}_{\alpha,\nu}^{\beta_j}|\le1\). Using this in the formula for the inverse matrix we find
\begin{equation}\label{eq:inverse-VY-bound}
\|(V^{\alpha,Y})^{-1}\|_{\infty}
\le \frac{M!}{\eta_0}.
\end{equation}
Next, for each monomial \(x^{\beta_j}\) and each pair \(u,v\in \Qone\), the mean-value theorem gives
\[
|u^{\beta_j}-v^{\beta_j}|
\le |\beta_j|\,\|u-v\|_\infty
\le (s-1)\|u-v\|_\infty.
\]
Therefore
\begin{equation}\label{eq:VQ-VY-bound}
\|V^{\alpha,Q}-V^{\alpha,Y}\|_{\infty}
\le M(s-1)\,\delta_\alpha.
\end{equation}
Let
\[
E^\alpha:=(V^{\alpha,Y})^{-1}(V^{\alpha,Q}-V^{\alpha,Y}).
\]
By \eqref{eq:inverse-VY-bound}, \eqref{eq:VQ-VY-bound}, \eqref{eq:delta-alpha-bound}, and \eqref{eq:small-r-condition}
\begin{equation}\label{eq:E-alpha-bound}
\|E^\alpha\|_{\infty}
\le M(s-1)\frac{M!}{\eta_0}\frac{4r}{\widetilde h}
\le \kappa <1.
\end{equation}
Consequently \(I+E^\alpha\) is invertible by the Neumann series, and
\[
V^{\alpha,Q}=V^{\alpha,Y}(I+E^\alpha)
\]
is invertible with
\[
\|(V^{\alpha,Q})^{-1}\|_{\infty}
\le \frac{\|(V^{\alpha,Y})^{-1}\|_{\infty}}{1-\|E^\alpha\|_{\infty}}
\le \frac{M!}{\eta_0(1-\kappa)}.
\]
Since \(|b_j|\le1\) for every \(j\), one has \(\|b\|_{\ell_1}\le M\), and therefore
\[
\|\lambda^{\alpha,Y}\|_{\ell_1}
\le \|(V^{\alpha,Y})^{-T}\|_{\ell_1\to\ell_1}\,\|b\|_{\ell_1}
\le \frac{M\,M!}{\eta_0},
\]
\[
\|\lambda^{\alpha,Q}\|_{\ell_1}
\le \|(V^{\alpha,Q})^{-T}\|_{\ell_1\to\ell_1}\,\|b\|_{\ell_1}
\le \frac{M\,M!}{\eta_0(1-\kappa)},
\]
which proves \eqref{eq:Y-weight-l1}. The identities
\[
(V^{\alpha,Y})^T\lambda^{\alpha,Y}=b,
\qquad
(V^{\alpha,Q})^T\lambda^{\alpha,Q}=b
\]
mean \(\widetilde A^{\alpha,Y}\) and \(\widetilde A^{\alpha,Q}\) integrate the normalized monomial basis in the same way as the
inflated average. Hence, both rules are exact on \(\Pi_{s-1}(\mathbb{R}^d)\).

For the difference of the weights, write
\[
V^{\alpha,Q}=V^{\alpha,Y}(I+E^\alpha),
\qquad
(V^{\alpha,Q})^{-T}=(V^{\alpha,Y})^{-T}(I+E^\alpha)^{-T}.
\]
Since \(\|E^\alpha\|_\infty\le \kappa\), the Neumann series also gives
\[
\|(I+E^\alpha)^{-T}-I\|_{\ell_1\to\ell_1}
\le \frac{\|(E^\alpha)^T\|_1}{1-\|(E^\alpha)^T\|_1}
\le \frac{\|E^\alpha\|_\infty}{1-\kappa}.
\]
Then \eqref{eq:YQ-weight-difference} follows from
\[
\begin{aligned}
\|\lambda^{\alpha,Q}-\lambda^{\alpha,Y}\|_{\ell_1}
&\le
\|(V^{\alpha,Y})^{-T}\|_{\ell_1\to\ell_1}
\,\|(I+E^\alpha)^{-T}-I\|_{\ell_1\to\ell_1}
\,\|b\|_{\ell_1}\\
&\le
\frac{M!}{\eta_0} \cdot \frac{\|E^\alpha\|_\infty}{1-\kappa}\cdot M\\
&\le
M^2(s-1)\frac{(M!)^2}{\eta_0^2(1-\kappa)}\frac{4r}{\widetilde h},
\end{aligned}
\]
using \eqref{eq:E-alpha-bound}.

Let \(P^\alpha\) be the Taylor polynomial of degree \(s-1\) of \(f\) at the center of \(C_\alpha\), and set
\(R^\alpha:=f-P^\alpha\). Standard Taylor estimates on \(\widehat C_\alpha\) give
\begin{equation}\label{eq:remainder-sup-bound}
\|R^\alpha\|_{L_\infty(\widehat C_\alpha)}
\le C_T(d,s)\,\widetilde h^s
\max_{|\beta|=s}\|D^\beta f\|_{L_\infty(\widehat C_\alpha)},
\end{equation}
\begin{equation}\label{eq:remainder-grad-bound}
\|\nabla R^\alpha\|_{L_\infty(\widehat C_\alpha)}
\le C_T(d,s)\,\widetilde h^{s-1}
\max_{|\beta|=s}\|D^\beta f\|_{L_\infty(\widehat C_\alpha)}.
\end{equation}
Because both cubature rules are exact on \(P^\alpha\),
\[
\left|A^\alpha(f)-\widetilde A^{\alpha,Y}(f)\right|
\le \bigl(1+\|\lambda^{\alpha,Y}\|_{\ell_1}\bigr)\|R^\alpha\|_{L_\infty(\widehat C_\alpha)},
\]
\[
\left|A^\alpha(f)-\widetilde A^{\alpha,Q}(f)\right|
\le \bigl(1+\|\lambda^{\alpha,Q}\|_{\ell_1}\bigr)\|R^\alpha\|_{L_\infty(\widehat C_\alpha)},
\]
which, together with \eqref{eq:Y-weight-l1} and \eqref{eq:remainder-sup-bound}, proves
\eqref{eq:Y-cubature-error} and \eqref{eq:Q-cubature-error}.

For the difference of the two cubature functionals, exactness on \(P^\alpha\) gives
\begin{flalign*}
&\widetilde A^{\alpha,Q}(f)-\widetilde A^{\alpha,Y}(f)  \\
&=
\sum_{\nu=1}^M(\lambda^{\alpha,Q}_\nu-\lambda^{\alpha,Y}_\nu)R^\alpha(q_{\alpha,\nu})
+
\sum_{\nu=1}^M\lambda^{\alpha,Y}_\nu\bigl(R^\alpha(q_{\alpha,\nu})-R^\alpha(y_{\alpha,\nu})\bigr).
\end{flalign*}

Therefore, by \eqref{eq:YQ-weight-difference}, \eqref{eq:Y-weight-l1}, \eqref{eq:remainder-sup-bound}, and
\eqref{eq:remainder-grad-bound},
\begin{flalign*}
&\left|\widetilde A^{\alpha,Q}(f)-\widetilde A^{\alpha,Y}(f)\right|\\
&\le
\|\lambda^{\alpha,Q}-\lambda^{\alpha,Y}\|_{\ell_1}\,\|R^\alpha\|_{L_\infty(\widehat C_\alpha)}
+ \|\lambda^{\alpha,Y}\|_{\ell_1}\,\|\nabla R^\alpha\|_{L_\infty(\widehat C_\alpha)}\,(2r)\\
&\le C(d,s,\eta_0)\,r\,\widetilde h^{s-1}
\max_{|\beta|=s}\|D^\beta f\|_{L_\infty(\widehat C_\alpha)},
\end{flalign*}
which is \eqref{eq:YQ-cubature-difference}.
\end{proof}

The same construction applies with any other explicit lower bound ensuring the invertibility of the reference Vandermonde
matrix, for example after selecting a reference skeleton by a metric-span criterion.

\begin{remark}
\label{rem:scale-sep}
Condition \eqref{eq:scale-separation} is an upper bound on scale separation $r/h$. All other constants are scale-independent, so
the largest admissible $r$ is a constant multiple of $h$ with a constant depending only on $d, s$, and $\eta_0$. Any fixed $\kappa\in (0, 1)$ satisfying $\kappa < 2MM!(s-1)/\eta_0$ works at the expense of inflating the norm of $(V^{\alpha,Q})^{-1}$ by a factor
of $(1-\kappa)^{-1}$. 
\end{remark}

\section{Translation-invariant stencil quasi-interpolants on an anchored grid}\label{sec:stencil}

This section reviews a translation-invariant quasi-interpolation framework that guarantees a standard sufficient condition for polynomial reproduction and approximation order $s$
for quasi-interpolants built from inflated-cell averages on a fixed cubic grid.

\subsection{Polynomial reproduction and approximation order}

Let $\phi$ be a compactly supported generator on $\mathbb{R}^d$. For $h>0$, we consider quasi-interpolants of the form
\[
(Q_h f)(x):=\sum_{\alpha\in\mathbb{Z}^d} c^\alpha(f)\,\phi_h(x-h\alpha), \qquad \phi_h(x):=(\sigma_h\phi)(x) := \phi(x/h).
\]
Standard Strang-Fix theory shows that, under appropriate compatibility conditions, polynomial reproduction of degree $s-1$
implies approximation order $s$; see \cite{StrangFix73,Jia2004,LeiJia1997}.
For later use we record the convolutional formulation employed by Lei-Jia-Cheney.

\begin{theorem}\label{thm:reproduction-besov}
Let $s\in\mathbb N$, $1\le p\le\infty$, and let $p'$ be the
Hölder conjugate of $p$. Let
$\phi\in L_p(\mathbb R^d)\cap L_1(\mathbb R^d),
g\in L_{p'}(\mathbb R^d)\cap L_1(\mathbb R^d)$,
with $g$ compactly supported. Define
\begin{equation}\label{eq:P-def}
(Pf)(x)
=
\sum_{\alpha\in\mathbb Z^d}
(f*g)(\alpha)\phi(x-\alpha),
\qquad 
Q_h:=\sigma_hP\sigma_{1/h}.
\end{equation}
Let
\begin{equation}\label{eq:K-def}
K(x,y):=\sum_{\alpha\in\mathbb Z^d}
\phi(x-\alpha)g(\alpha-y)
\end{equation}
be the kernel associated with $P$. Assume that $K$ satisfies
\begin{equation}\label{eq:K-shift}
K(x-\nu,y)=K(x,y+\nu),
\qquad \nu\in\mathbb Z^d, \; \text{for a.e. }x,y\in\mathbb{R}^d
\end{equation}
and the kernel bounds
\begin{equation}\label{eq:K-adm1}
y\mapsto \int_{\mathbb R^d}|K(x,y)|\,dx
\in L_\infty([0,1)^d),
\end{equation}
\begin{equation}\label{eq:K-adm2}
x\mapsto
\int_{\mathbb R^d}(1+\|y\|_\infty)^s|K(x,y)|\,dy
\in L_\infty([0,1)^d).
\end{equation}
Assume also that $\widehat\phi(0)\ne0$, that $\phi$ satisfies the
Strang-Fix conditions
\begin{equation}\label{eq:SF}
D^\beta\widehat\phi(2\pi\nu)=0,
\qquad
\nu\in\mathbb Z^d\setminus\{0\},
\quad |\beta|\le s-1,
\end{equation}
and that $\phi, g$ satisfy the coupling conditions
\begin{equation}\label{eq:compat}
D^\beta\!\left(1-\widehat\phi\,\widehat g\right)(0)=0,
\qquad
|\beta|\le s-1.
\end{equation}
Then
\[
Pq=q,
\qquad q\in\Pi_{s-1}(\mathbb R^d).
\]
Consequently,
\[
Q_hq=q,
\qquad q\in\Pi_{s-1}(\mathbb R^d),
\quad h>0.
\]
Moreover, for every $f\in W_p^s(\mathbb R^d)$,
\begin{equation}\label{eq:LH-Sobolev-order}
\|f-Q_hf\|_{L_p(\mathbb R^d)}
\le
C h^s |f|_{W_p^s(\mathbb R^d)},
\end{equation}
where $C$ is independent of $f$, $h$, and $p$.
\end{theorem}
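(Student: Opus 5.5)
The argument has three parts: polynomial reproduction for $P$, transfer of reproduction to $Q_h$ by scaling, and the $L_p$ error bound via a local Taylor argument controlled by the kernel bounds \eqref{eq:K-adm1}--\eqref{eq:K-adm2}.

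\emph{Reproduction for $P$.} Fix $q\in\Pi_{s-1}(\mathbb R^d)$. Since $g$ is compactly supported and integrable, $q*g$ is again a polynomial of degree at most $s-1$. Its Taylor coefficients are obtained from $q$ by applying the moments of $g$, which in Fourier language means applying $\widehat g(-iD)$, truncated at order $s-1$, to $q$. Next, the Strang--Fix conditions \eqref{eq:SF} and Poisson summation show that for every $u\in\Pi_{s-1}$ the semi-discrete convolution $\sum_\alpha u(\alpha)\phi(\cdot-\alpha)$ is again a polynomial. It equals $\widehat\phi(-iD)u$, up to the normalizing convention for the Fourier transform. Combining the two facts,
\[
Pq=\widehat\phi(-iD)\,\widehat g(-iD)\,q .
\]
The coupling conditions \eqref{eq:compat} say that $\widehat\phi\,\widehat g$ agrees with $1$ to order $s-1$ at the origin. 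Every derivative of $q$ of order at least $s$ vanishes, so $Pq=q$. I expect the sign and $2\pi$ bookkeeping to be the only delicate point here. Sign conventions can be cross-checked on monomials, and \eqref{eq:K-shift} gives a consistent way to do this.

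\emph{Scaling.} Because $\sigma_h$ and $\sigma_{1/h}$ preserve $\Pi_{s-1}$, we get $Q_hq=\sigma_hP\sigma_{1/h}q=\sigma_h\sigma_{1/h}q=q$. For the error estimate, change variables: $\|f-Q_hf\|_p=h^{d/p}\|\sigma_{1/h}f-P\sigma_{1/h}f\|_p$, and $|\sigma_{1/h}f|_{W^s_p}=h^{s-d/p}|f|_{W^s_p}$. It therefore suffices to prove the case $h=1$, namely $\|f-Pf\|_p\le C|f|_{W^s_p}$.

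\emph{Error bound for $h=1$.} Write $Pf(x)=\int K(x,y)f(y)\,dy$. Use \eqref{eq:K-shift} to reduce to $x$ in unit cubes $\nu+[0,1)^d$. On each such cube, let $T_\nu$ be the averaged Taylor polynomial of degree $s-1$ on a fixed enlargement of the cube, of the kind used in the Bramble--Hilbert lemma. By reproduction, $f-Pf=(f-T_\nu)-P(f-T_\nu)$ on that cube. The remainder $f-T_\nu$ is represented by a Sobolev integral formula. Its growth away from the cube is controlled by $(1+\|y-\nu\|_\infty)^{s}$ times local $L_p$ norms of $D^sf$.

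The main obstacle is summing these local estimates in $L_p$ when the kernel is not compactly supported in $y$. I would use Schur-type bounds: the $y$-integral condition \eqref{eq:K-adm2} weighted by $(1+\|y\|)^s$ supplies one Schur constant, and \eqref{eq:K-adm1} supplies the other. Interpolating between the $p=1$ and $p=\infty$ cases then gives the $L_p$ bound with a constant independent of $p$. This is exactly the Lei--Jia--Cheney argument, and I would follow it, citing \cite{LeiJia1997,Jia2004} for the weighted Schur estimate rather than reproducing it in full.
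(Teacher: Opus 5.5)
Your proposal is correct and follows the same three-step route as the paper's outline:
\begin{itemize}
\item polynomial reproduction for $P$ from the Strang--Fix and coupling conditions, where you spell out the symbol calculus $Pq=\widehat\phi(-iD)\,\widehat g(-iD)\,q$ that the paper only cites from Jia;
\item the identity $Q_hq=\sigma_h\sigma_{1/h}q=q$;
\item the $L_p$ error bound taken from Lei--Jia--Cheney, with your reduction to $h=1$ via $\|\sigma_h u\|_p=h^{d/p}\|u\|_p$ and $|\sigma_{1/h}f|_{W_p^s}=h^{s-d/p}|f|_{W_p^s}$ being the standard way that citation is applied.
\end{itemize}
One caveat: your heuristic description of the last step is looser than the cited argument. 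The Taylor remainder at points $y$ far from a unit cube depends on $D^sf$ along the whole segment to $y$, not on local norms. However, since both you and the paper defer this step to the reference, this does not create a gap relative to the paper.
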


These results are already known from the literature, so we provide an outline of the proof with references to the relevant existing work.

\begin{proof}[Proof outline]
The stated Strang-Fix and coupling conditions imply polynomial
reproduction for the quasi-projection $P$; see
\cite[Lem.~3.2]{Jia2003} and \cite[\S3]{Jia2004}.
If $q\in\Pi_{s-1}$, then $\sigma_{1/h}q\in\Pi_{s-1}$, so
$Q_hq
=
\sigma_hP\sigma_{1/h}q
=
\sigma_h\sigma_{1/h}q
=
q.
$
Statement \eqref{eq:LH-Sobolev-order} is \cite[Thm.~2.1]{LeiJia1997}, applied under
\eqref{eq:K-shift}--\eqref{eq:K-adm2}.
\end{proof}

\begin{lemma}\label{lem:stencil-weights}
Let
\[
M:=\dim\Pi_{s-1}(\mathbb R^d)
=
\binom{s-1+d}{d}.
\]
Let $S\subset\mathbb Z^d$ consist of $M$ lattice points and assume
that $S$ is unisolvent for $\Pi_{s-1}(\mathbb R^d)$. Fix
$\rho\ge0$, and define
\[
B_\rho(x)
=
(1+2\rho)^{-d}
\mathbf 1_{[-1/2-\rho,\;1/2+\rho]^d}(x).
\]
Let $\phi$ satisfy $\widehat\phi(0)\ne0$. Then there exist unique
coefficients $(a_j(\rho))_{j\in S}$ such that, with
\begin{equation} \label{g-rho}
\psi_\rho(x)=\sum_{j\in S}a_j(\rho)B_\rho(x-j),
\qquad
g_\rho(x):=\check\psi_{\rho}(x):=\psi_{\rho}(-x),
\end{equation}
the coupling conditions \eqref{eq:compat} hold for $\phi, g_\rho$.
Moreover, if $0\le\rho\le\rho_0<\infty$, then
\begin{equation}\label{eq:uniforml1}
\sup_{0\le\rho\le\rho_0}
\sum_{j\in S}|a_j(\rho)|<\infty.
\end{equation}
\end{lemma}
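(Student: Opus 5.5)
The plan is to turn the coupling conditions \eqref{eq:compat} into a square linear system for the unknowns $(a_j(\rho))_{j\in S}$. Unisolvence of $S$ then gives unique solvability, and continuity of the system in $\rho$ gives the uniform bound \eqref{eq:uniforml1}.

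\emph{Fourier side.} Since $g_\rho(x)=\psi_\rho(-x)$ and $B_\rho$ is even, we have
\[
\widehat g_\rho(\xi)=\widehat\psi_\rho(-\xi)=\widehat B_\rho(\xi)\sum_{j\in S}a_j(\rho)e^{i j\cdot\xi},
\qquad
\widehat B_\rho(\xi)=\prod_{k=1}^d \frac{\sin((1/2+\rho)\xi_k)}{(1/2+\rho)\xi_k}.
\]
This is entire in $\xi$, satisfies $\widehat B_\rho(0)=1$, and all its Taylor coefficients at $0$ are polynomials in $\rho$. Because $\widehat\phi(0)\neq0$, the function $1/\widehat\phi$ is smooth near $0$. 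The coupling conditions $D^\beta(1-\widehat\phi\widehat g_\rho)(0)=0$ for $|\beta|\le s-1$ are therefore equivalent, by the Leibniz rule, to matching the $(s-1)$-jet at $0$:
\[
D^\beta\Bigl(\sum_{j\in S}a_j e^{ij\cdot\xi}\Bigr)(0)=D^\beta\Bigl(\frac{1}{\widehat\phi\,\widehat B_\rho}\Bigr)(0)=:c_\beta(\rho),\qquad |\beta|\le s-1.
\]
This works because $\widehat B_\rho$ is nonvanishing near $0$, with $\widehat B_\rho(0)=1$. Explicitly, the conditions read
\[
\sum_{j\in S}a_j\,(ij)^\beta=c_\beta(\rho),\qquad |\beta|\le s-1.
\]

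\emph{Solvability and uniqueness.} The system above is $M\times M$, with matrix $V=\bigl(i^{|\beta|}j^\beta\bigr)_{\beta,j}$. Up to the invertible diagonal factor $\operatorname{diag}(i^{|\beta|})$, this is the transpose of the Vandermonde matrix $(j^\beta)_{j\in S,|\beta|\le s-1}$. That matrix is invertible precisely because $S$ is unisolvent for $\Pi_{s-1}(\mathbb R^d)$. Hence there is a unique complex solution $a(\rho)=V^{-1}c(\rho)$.

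To see the solution is real, take complex conjugates. The conjugated system matches the jet of the conjugated target $\overline{c_\beta}$. Conjugating the Fourier transform corresponds to $\xi\mapsto-\xi$, and real $\phi$ and real $B_\rho$ give consistent signs, so $\overline{a_j}$ solves the same system and uniqueness forces real coefficients. The realness step assumes $\phi$ is real-valued; the lemma does not require real $a_j$, so for complex $\phi$ one simply allows complex coefficients.

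\emph{Uniform bound.} The matrix $V$ does not depend on $\rho$, so
\[
\sum_{j\in S}|a_j(\rho)|\le \|V^{-1}\|\,\|c(\rho)\|.
\]
It remains to see that $\rho\mapsto c_\beta(\rho)$ is bounded on $[0,\rho_0]$. By Faà di Bruno and the Leibniz rule, $c_\beta(\rho)$ is a polynomial in the derivatives of $1/\widehat\phi$ at $0$ (fixed numbers) and in the derivatives $D^\gamma\widehat B_\rho(0)$, $|\gamma|\le s-1$, which are polynomials in $(1/2+\rho)$. It also involves $1/\widehat B_\rho(0)=1$. Hence $c(\rho)$ is a polynomial in $\rho$, hence continuous and bounded on the compact interval $[0,\rho_0]$, which gives \eqref{eq:uniforml1}.

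The only delicate point is the smoothness of $\widehat\phi$ at $0$ needed to differentiate $1/\widehat\phi$ up to order $s-1$. This requires enough moments of $\phi$, for instance $\phi$ compactly supported as in this section, or at least $(1+|x|)^{s-1}\phi\in L_1$. I would state that assumption explicitly, since it is implicit in \eqref{eq:SF} making sense.
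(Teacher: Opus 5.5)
Your argument is correct and is essentially the paper's proof. Both proofs reduce the coupling conditions to an $M\times M$ linear system that is invertible because $S$ is unisolvent (a Vandermonde matrix), and both obtain \eqref{eq:uniforml1} from the polynomial dependence on $\rho$.

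The only difference is bookkeeping. The paper fixes the $\rho$-independent targets $D^\beta\widehat g_\rho(0)$ and puts all $\rho$-dependence into the moment matrix $\mu(\rho)=L(\rho)V$, with $L(\rho)$ unit lower triangular. You instead divide out $\widehat B_\rho$ on the Fourier side, which amounts to inverting $L(\rho)$. As a result your matrix is the fixed $V$, and the uniform bound follows at once from $\|V^{-1}\|\sup_{0\le\rho\le\rho_0}\|c(\rho)\|$. Your remarks on realness of the $a_j(\rho)$ and on the needed smoothness of $\widehat\phi$ at $0$ address points the paper leaves implicit.
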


\begin{proof}
The coupling conditions are equivalent to
$
D^\beta(\widehat\phi\,\widehat g_\rho)(0)=\delta_{\beta,0}, |\beta|\le s-1.
$
By Leibniz's rule,
\[
D^\beta(\widehat\phi\,\widehat g_\rho)(0)
=
\sum_{\gamma\le\beta}
{\beta\choose\gamma}
D^\gamma\widehat\phi(0)\,
D^{\beta-\gamma}\widehat g_\rho(0).
\]
The term with $\gamma=0$ is $\widehat\phi(0)D^\beta\widehat g_\rho(0).$ All other terms contain derivatives $D^\eta\widehat g_\rho(0)$ with
$|\eta|<|\beta|$. Since $\widehat\phi(0)\ne0$, the equations can be
solved recursively in increasing order of $|\beta|$. Thus, the coupling
conditions uniquely prescribe the values
$
D^\beta\widehat g_\rho(0), |\beta|\le s-1,
$
and the Fourier-moment identities
\[
D^\beta\widehat g_\rho(0)
=
(-i)^{|\beta|}
\int_{\mathbb R^d}x^\beta g_\rho(x)\,dx, \qquad g_\rho(x)
=
\sum_{j\in S}a_j(\rho)B_\rho(x+j)
\]
then give the linear system
\[
\sum_{j\in S}
a_j(\rho)\mu_{\beta,j}(\rho)
=
M_\beta,
\qquad
\mu_{\beta,j}(\rho)
:=
\int_{\mathbb R^d}
x^\beta B_\rho(x+j)\,dx,
\]
where the $M_\beta$ are determined uniquely by the coupling
conditions.

The change of variables $u=x+j$ gives
\[
\mu_{\beta,j}(\rho)
=
\int_{\mathbb R^d}
(u-j)^\beta B_\rho(u)\,du.
\]
Applying the multi-index binomial theorem,
we obtain
\[
\mu_{\beta,j}(\rho)
=
\sum_{\eta\le\beta}
{\beta\choose\eta}
m_\eta(\rho)
(-j)^{\beta-\eta},
\qquad
m_\eta(\rho)
=
\int_{\mathbb R^d}
u^\eta B_\rho(u)\,du.
\]
Since $B_\rho$ is normalized, $m_0(\rho)=1$. Therefore
\[
\mu_{\beta,j}(\rho)
=
(-j)^\beta
+
\sum_{\substack{\gamma\le \beta\\ |\gamma|<|\beta|}}
c_{\beta,\gamma}(\rho)(-j)^\gamma ,
\]
where the sum is over multi-indices of strictly smaller total degree and
the coefficients $c_{\beta,\gamma}(\rho)$ depend continuously on
$\rho$.

Order the multi-indices by nondecreasing total degree and let
\[
V=\bigl((-j)^\beta\bigr)_{|\beta|\le s-1,\;j\in S}
\]
be the multivariate Vandermonde matrix associated with the stencil
$S$. The preceding expansion shows that $\mu(\rho)=L(\rho)V$,
where $L(\rho)$ is unit lower triangular. Consequently,
$
\det\mu(\rho)
=
\det L(\rho)\det V
=
\det V.
$
Since $S$ is unisolvent for
$\Pi_{s-1}(\mathbb R^d)$, the Vandermonde matrix $V$ is
nonsingular. Hence $\mu(\rho)$ is nonsingular for every
$\rho\ge0$, and the coefficients $a_j(\rho)$ exist uniquely.

Every entry of $\mu(\rho)$ is a polynomial in
$\rho$, so $\mu(\rho)$ depends continuously on
$\rho$ and $\mu(\rho)^{-1}$ is continuous on every
compact interval $[0,\rho_0]$. Since the moment vector
$M=(M_\beta)$ is independent of $\rho$, the solution $a(\rho) = \mu(\rho)^{-1}M$ depends continuously on $\rho$. This establishes \eqref{eq:uniforml1}.
\end{proof}

As a consequence of \eqref{eq:uniforml1} and each $g_\rho$ having compact support with $||g_\rho||_{L_p(\mathbb R^d)}$ uniformly bounded for $0\le \rho\le \rho_0<\infty$ for any $1\le p \le \infty$, provided also
\[
B_\phi:=
\operatorname*{ess\,sup}_{x\in[0,1)^d}
\sum_{\alpha\in\mathbb Z^d}|\phi(x-\alpha)|<\infty,
\]
the kernels
$
K_\rho(x,y)
:=
\sum_{\alpha\in\mathbb Z^d}
\phi(x-\alpha)g_\rho(\alpha-y)
$
satisfy \eqref{eq:K-shift}, \eqref{eq:K-adm1}, \eqref{eq:K-adm2} uniformly in $\rho$. Then the methods in \cite{StrangFix73,Jia2004,LeiJia1997} used to establish Theorem \ref{thm:reproduction-besov} similarly prove the following.

\begin{corollary}\label{cor:reproduction-besov-scaled}
Assume that $\phi\in L_p(\mathbb R^d)\cap L_1(\mathbb R^d)$, $1\le p\le\infty$, is compactly supported, $B_\phi < \infty$, $\widehat\phi(0)\ne0$, and $\phi$ satisfies the Strang-Fix conditions \eqref{eq:SF}. Let $0\le\rho(h)\le\rho_0<\infty$. Then $g_{\rho(h)}$ may be defined as in Lemma \ref{lem:stencil-weights} so that the stencil weights $(a_j(\rho))_{j\in S}$ are unique and $g_\rho$ satisfies coupling conditions \eqref{eq:compat}. The quasi-projection operator
\[
P_{\rho}f
=
\sum_{\alpha\in\mathbb Z^d}
(f*g_\rho)(\alpha)\phi(\cdot-\alpha),
\qquad 
Q_h
=
\sigma_hP_{\rho(h)}\sigma_{1/h} 
\]
satisfies
\[
Q_hq=q,
\qquad q\in\Pi_{s-1}(\mathbb R^d),
\]
and
\[
\|f-Q_hf\|_{L_p(\mathbb R^d)}
\le
C h^s |f|_{W_p^s(\mathbb R^d)},
\qquad f\in W_p^s(\mathbb R^d),
\]
where $C$ is independent of $f$, $p$, $r$, and $h$. 
\end{corollary}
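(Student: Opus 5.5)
The plan is to reduce Corollary~\ref{cor:reproduction-besov-scaled} to Theorem~\ref{thm:reproduction-besov}, applied for each fixed $\rho=\rho(h)\in[0,\rho_0]$ with $g=g_\rho$ from Lemma~\ref{lem:stencil-weights}. The only new point is that the constant $C$ must be uniform in $\rho$. We then track that constant through the Lei--Jia--Cheney argument.

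\textbf{Existence, uniqueness and reproduction.}
\begin{itemize}
\item For each $\rho$, Lemma~\ref{lem:stencil-weights} gives unique weights $a_j(\rho)$, so $g_\rho$ satisfies \eqref{eq:compat}.
\item $g_\rho$ is compactly supported: its support lies in $-S+[-1/2-\rho_0,1/2+\rho_0]^d$.
\item $g_\rho\in L_{p'}\cap L_1$, with $\|g_\rho\|_{L_q}\le (1+2\rho)^{-d}(1+2\rho)^{d/q}\sum_j|a_j(\rho)|$. This is bounded uniformly by \eqref{eq:uniforml1}.
\item Hence the Strang--Fix hypothesis \eqref{eq:SF} on $\phi$ together with \eqref{eq:compat} gives $P_\rho q=q$ for $q\in\Pi_{s-1}$, as in the proof outline of Theorem~\ref{thm:reproduction-besov}.
\item Scaling then gives $Q_hq=q$, since $\sigma_{1/h}$ preserves $\Pi_{s-1}$.
\end{itemize}

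\textbf{Kernel conditions, uniformly in $\rho$.}
\begin{itemize}
\item Shift invariance \eqref{eq:K-shift} is immediate: reindex $\alpha\mapsto\alpha+\nu$ in $K_\rho(x-\nu,y)=\sum_\alpha\phi(x-\nu-\alpha)g_\rho(\alpha-y)$.
\item For \eqref{eq:K-adm1}, bound
\[
\int|K_\rho(x,y)|\,dx\le \|\phi\|_{L_1}\sum_\alpha|g_\rho(\alpha-y)|.
\]
The sum has at most $c(S,\rho_0,d)$ nonzero terms, each at most $\|g_\rho\|_\infty$. This is uniformly bounded.
\item For \eqref{eq:K-adm2}, with $x\in[0,1)^d$, use $B_\phi$ and compact support of $\phi$. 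Only $\alpha$ with $\|x-\alpha\|_\infty\le R_\phi$ contribute. For those $\alpha$, $g_\rho(\alpha-y)\neq0$ forces $\|y\|_\infty\le 1+R_\phi+R_S+\rho_0+1/2$. So the weight $(1+\|y\|_\infty)^s$ is bounded by a constant independent of $\rho$, and
\[
\int(1+\|y\|_\infty)^s|K_\rho(x,y)|\,dy\le C\,B_\phi\,\|g_\rho\|_{L_1}.
\]
\end{itemize}

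\textbf{Uniform error constant (main obstacle).} Theorem~\ref{thm:reproduction-besov} as cited only gives $C$ depending on $(\phi,g)$. I would reopen the proof of \cite[Thm.~2.1]{LeiJia1997}. There the error is written via the Taylor expansion of $f$ about $x$, with integral remainder. Polynomial reproduction kills the polynomial part, and the remainder is bounded by the kernel moments in \eqref{eq:K-adm2}, the column integrals in \eqref{eq:K-adm1}, and Minkowski's inequality. The resulting constant is a function only of $d$, $s$ and the two kernel suprema. By the previous step, both suprema are bounded uniformly over $\rho\in[0,\rho_0]$, so $C$ is uniform.

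Finally, scaling $Q_h=\sigma_hP_{\rho(h)}\sigma_{1/h}$ gives the factor $h^s$, as in the fixed-$\rho$ case. The constant is independent of $h$, and of $r$ because $r$ enters only through $\rho(h)\le\rho_0$. Independence of $p$ comes from interpolating between the $L_1$ and $L_\infty$ bounds in the same Minkowski argument. The point needing care is confirming that no hidden dependence on $g$, such as the support radius, enters beyond these suprema; the support radius is itself controlled by $\rho_0$.
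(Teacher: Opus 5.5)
Your proposal is correct and follows the same route as the paper. The paper argues that the uniform $\ell_1$ bound \eqref{eq:uniforml1}, together with the uniform compact support and $L_q$ bounds on $g_\rho$ and $B_\phi<\infty$, gives the kernel conditions \eqref{eq:K-shift}--\eqref{eq:K-adm2} uniformly in $\rho\in[0,\rho_0]$, and it then appeals to the Lei--Jia--Cheney methods for a $\rho$-uniform constant. You go further than the paper, which only asserts these uniform kernel bounds, by verifying each one explicitly and by indicating where the constant's dependence on $g_\rho$ enters.
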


\subsection{Grid, local averages, and translation-invariant stencils}

Let $h\mathbb{Z}^d$ be the cubic lattice on $\mathbb{R}^d$ (or on a torus to avoid boundaries).

For each $\alpha\in\mathbb{Z}^d$, let
\[
C_\alpha:=h\alpha+[-h/2,h/2]^d,
\quad
\widehat C_\alpha:=\Thick(C_\alpha):=h\alpha+[-h/2-r,h/2+r]^d.
\]

Define the inflated local average
$
A^\alpha(f):=(\mathcal{L}^d(\widehat C_\alpha))^{-1}\int_{\widehat C_\alpha} f(x)\,dx.
$
In addition, for $\rho=r/h$ define
\[
c_{(h)}^\alpha(f)
:=
\int_{\mathbb R^d}
f(y)g_{h,r}(h\alpha-y)\,dy,
\qquad 
g_{h,r}(x):=h^{-d}g_\rho(x/h),
\]
with $g_\rho$ as in Lemma \ref{lem:stencil-weights}. Applying the same lemma to identify coefficients $(a_j(\rho))_{j\in S}$ and using a change of variables then shows 
\[
\int_{\mathbb R^d}
f(y)g_{h,r}(h\alpha-y)\,dy = \sum_{j\in S}a_j(\rho)A^{\alpha+j}(f).
\]
Thus, the coefficient functionals $c_{(h)}^\alpha(f)$ depend only on integrals of $f$ over inflated cubes.

\begin{remark}
Corollary~\ref{cor:reproduction-besov-scaled} is stated on $\mathbb{R}^d$, while in the present paper the target function is given on the bounded cube $U$. The geometric scales are different but compatible: the coarse partition uses cubes $C_\alpha$ of side length $h$, the local integral functionals are taken over the inflated cubes $\widehat C_\alpha=C_\alpha+[-r,r]^d$, and the reconstruction itself is performed on the same coarse lattice $h\mathbb{Z}^d$. Thus, the periodic quasi-interpolant is global only in the sense that its translates are indexed by the full coarse grid; its local functionals still probe $f$ only on an $O(h)$ neighborhood of each coarse cube, since $\widetilde h=h+2r\asymp h$ in the regime $r<h$.

For the extension-restriction step, one may first extend $f$ from $U$ to a function $Ef$ on $\mathbb{R}^d$ (or, equivalently, to an $O(h)$ neighborhood surrounding $U$ that contains all inflated cubes required by the stencil), then apply the global anchored-grid quasi-interpolant to $Ef$, and finally restrict the result back to $U$. Because every coefficient functional is supported on finitely many inflated cubes, only values of $Ef$ in a neighborhood of thickness $O(h)$ around $U$ are used. Hence the approximation scale remains the coarse-grid scale $h$. If a bounded extension operator is available on the underlying smoothness space, its norm enters only into the constant, while the order in $h$ is unchanged. On periodic domains one may avoid the extension entirely by working on a torus from the outset.
\end{remark}

\section{Cubature-based high-order quasi-interpolation on non-quasi-uniform node sets}\label{sec:final-cubature-qi}

We now combine the local cubature results of Section~\ref{sec:local-cubature} with the translation-invariant stencil
framework of Section~\ref{sec:stencil}. The quasi-interpolant requires exact integral averages, but these averages are
not available. Since we wish to approximate functions using disordered node sets $Q$, a direct approach would be to
replace the integrals by a $Q$-cubature and then estimate the induced error. This is not an available route, because
$Q$ itself is only partially known. Therefore, we replace the unavailable $Q$-cubatures by the known cubatures associated
with $Y$. Thus $Y$ is used as a controlled perturbation of $Q$ for the purpose of constructing an explicit quasi-interpolant. The resulting operator is indexed by $Q$, while its computable cubature part is $Y$-based.

\subsection{Assumptions for the perturbative theorem}\label{subsec:QI-assumptions}

We collect the hypotheses used in Theorem~\ref{thm:QI-perturbed-reference}.  They are separated into the assumptions
belonging to the quasi-interpolant and those needed for the cubature replacement.

\subsubsection{Quasi-interpolation assumptions}

\begin{assumption}\label{ass:QI}
Let \(\phi\) be a generator on \(\mathbb{R}^d\) satisfying Strang-Fix conditions \eqref{eq:SF} and $\widehat\phi(0)\ne0$ and let \(\mathcal{S}\subset\mathbb{Z}^d\) be a stencil. For $\rho:=r/h$ with $0\le\rho\le\rho_0<\infty$, let 
$(a_j(\rho))_{j\in S}$ be coefficients. We assume the following:
\begin{enumerate}
\item[\textup{(QI1)}] The generator \(\phi\) is compactly supported and
\[
B_\phi:=\operatorname*{ess\,sup}_{x\in[0,1)^d}
\sum_{\alpha\in\mathbb Z^d}|\phi(x-\alpha)|<\infty.
\]
\item[\textup{(QI2)}] The stencil \(\mathcal{S}\) is finite and $\sup_{0\le\rho\le\rho_0}
\sum_{j\in S}|a_j(\rho)|<\infty.$
\item[\textup{(QI3)}] The translation-invariant quasi-interpolant based on inflated averages,
\begin{equation}\label{eq:continuous-QI-final}
(Q_h f)(x):=\sum_{\alpha\in\mathbb{Z}^d} c^\alpha(f)\,\phi_h(x-h\alpha),
\quad
c^\alpha(f):=\sum_{j\in\mathcal{S}} a_j(\rho) A^{\alpha+j}(f),
\end{equation}
reproduces all polynomials of degree at most \(s-1\) and satisfies
\begin{equation}\label{eq:QI-assumption-final}
\|f-Q_h f\|_{L_\infty(U)}
\le
C_{\mathrm{QI}}h^s
\max_{|\beta|=s}\|D^\beta f\|_{L_\infty(\widehat U)}
\end{equation}
for all \(f\in C^s(\widehat U)\), where \(\widehat U\) contains every inflated cube used by the stencil.
\end{enumerate}
\end{assumption}

Assumption~\ref{ass:QI} \textup{(QI3)} is the standard approximation estimate obtained from polynomial reproduction and the
Strang-Fix conditions, which is justified under the construction used in Corollary \ref{cor:reproduction-besov-scaled}.

\subsubsection{Combinatorial and cubature replacement assumptions}

For every coarse cube entering the stencil construction, let
\[
Y^\alpha\subset \Thick(C_\alpha),\qquad Q^\alpha\subset C_\alpha,
\qquad |Y^\alpha|=|Q^\alpha|,
\]
be the local reference set and the local disordered node set.  The set \(Y^\alpha\) is known.  The set \(Q^\alpha\) is the
local geometry to which the quasi-interpolant is attached.  The point is that the computable cubature will use only nodes
from \(Y^\alpha\), while the validity of this replacement is supplied by the HT matching theorem and the cubature replacement
theorem.

Let
\[
M:=\dim \Pi_{s-1}(\mathbb R^d),
\qquad
\widetilde h:=h+2r.
\]
When a reference skeleton
\[
(Y^\alpha)_{\rm sk}=\{y_{\alpha,1},\dots,y_{\alpha,M}\}\subset Y^\alpha
\]
has been selected, define the corresponding reference cubature by
\begin{equation}\label{eq:reference-Y-average}
\widetilde A^{\alpha,Y}(f):=\sum_{\nu=1}^{M}\lambda^{\alpha,Y}_\nu f(y_{\alpha,\nu}),
\end{equation}
where the coefficients \(\lambda^{\alpha,Y}_\nu\) are chosen so that the rule is exact on \(\Pi_{s-1}(\mathbb R^d)\) for the
inflated average \(A^\alpha\).  The corresponding cubature on \(Q^\alpha\) is not used in the definition below.  It enters
only in the proof, after the HT theorem supplies the matching from \(Y^\alpha\) to \(Q^\alpha\).

\begin{definition}\label{def:QY-quasi-interpolant}
For \(f\in C^s(\widehat U)\) and $\rho:=r/h>0$, define
\begin{equation}\label{eq:QY-coefficients}
b^\alpha(f):=\sum_{j\in\mathcal S}a_j(\rho) \widetilde A^{\alpha+j,Y}(f),
\end{equation}
and assign the \(Q\)-quasi-interpolant
\begin{equation}\label{eq:QY-quasi-interpolant}
(\mathcal Q_{h,Q}f)(x):=\sum_{\alpha\in\mathbb Z^d}b^\alpha(f)\phi_h(x-h\alpha).
\end{equation}
\end{definition}

The subscript \(h,Q\) emphasizes this construction is assigned to the actual disordered set $Q$, while the formula itself uses only the
known cubature rules on \(Y\).

\begin{assumption}\label{ass:cubature-perturbation}
For every coarse cube \(C_\alpha\) entering the stencil construction, the following assumptions hold uniformly in \(\alpha\):
\begin{enumerate}
\item[\textup{(C1)}] The local population vectors of \(Y^\alpha\) and \(Q^\alpha\), computed on the mesoscopic partition of
scale \(r\), satisfy the Hall-type inequalities \eqref{eq:hall-majorization} in Theorem~\ref{thm:matching}.  Equivalently, the HT hypotheses hold locally
for the pair \((Y^\alpha,Q^\alpha)\).
\item[\textup{(C2)}] The known set \(Y^\alpha\) contains a reference skeleton
\((Y^\alpha)_{\rm sk}=\{y_{\alpha,1},\dots,y_{\alpha,M}\}\) whose normalized Vandermonde determinant satisfies
\[
\bigl|\Delta_{s-1}(\overline{\overline{Y^\alpha}}_{\rm sk})\bigr|\ge \eta_0>0.
\]
The same constant \(\eta_0\) is used for all \(\alpha\).  In addition, \(0<r<h\), and the scale \(r\) satisfies the smallness condition from
Theorem~\ref{thm:transfer-reference-cubature}, namely
\[
M(s-1)\frac{M!}{\eta_0}\frac{4r}{h+2r}\le \kappa,
\]
with $\kappa \in (0,1)$ fixed such that $\kappa < 2MM!(s-1)/\eta_0$.
\end{enumerate}
\end{assumption}

Assumption \textup{(C1)} is the combinatorial input.  By Theorem~\ref{thm:matching}, it gives a matching
\(\tau^\alpha:Y^\alpha\to Q^\alpha\) with range at most \(2r\).  Assumption \textup{(C2)} is imposed only on the known set
\(Y^\alpha\), together with the scale separation required by the cubature replacement theorem.  Once the matching from
\textup{(C1)} is available, \textup{(C2)} supplies the remaining hypotheses of Theorem~\ref{thm:transfer-reference-cubature}.

\begin{theorem}\label{thm:QI-perturbed-reference}
Assume \textup{(QI1)--(QI3)} and \textup{(C1)--(C2)}. Then the \(Q\)-quasi-interpolant
\(\mathcal Q_{h,Q}\) of Definition~\ref{def:QY-quasi-interpolant} satisfies
\begin{equation}\label{eq:QI-perturbed-bound}
\begin{aligned}
\|f-\mathcal Q_{h,Q} f\|_{L_\infty(U)}
&\le
C\bigl(h^s+r h^{s-1}\bigr)
\max_{|\beta|=s}\|D^\beta f\|_{L_\infty(\widehat U)} \\
&\qquad\text{for all }f\in C^s(\widehat U),
\end{aligned}
\end{equation}
where \(C\) depends only on \(d,s\), the generator \(\phi\), the stencil \(\mathcal S\), the uniform bound on coefficients $\{a_j(\rho)\}$ given by (QI2), and
the uniform constants in Theorem~\ref{thm:transfer-reference-cubature}. Since \(r<h\), the right-hand side is of order \(h^s\).
Moreover, if \(\mathcal{H}_{h,Q}\) denotes the auxiliary quasi-interpolant obtained from the unavailable cubatures on
\(Q\), then
\begin{equation}\label{eq:QY-to-Q-comparison}
\|\mathcal{H}_{h,Q}f-\mathcal Q_{h,Q}f\|_{L_\infty(U)}
\le
C r h^{s-1}
\max_{|\beta|=s}\|D^\beta f\|_{L_\infty(\widehat U)}.
\end{equation}
\end{theorem}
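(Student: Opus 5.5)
The plan is a triangle inequality: compare $\mathcal Q_{h,Q}f$ with the exact-average quasi-interpolant $Q_hf$ of (QI3), and bound the difference using the local cubature estimates of Theorem~\ref{thm:transfer-reference-cubature}. Write
\[
f-\mathcal Q_{h,Q}f=(f-Q_hf)+(Q_hf-\mathcal Q_{h,Q}f).
\]
The first term is bounded by \eqref{eq:QI-assumption-final}, which gives $C_{\mathrm{QI}}h^s\max_{|\beta|=s}\|D^\beta f\|_{L_\infty(\widehat U)}$.

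For the second term, note that
\[
Q_hf-\mathcal Q_{h,Q}f=\sum_\alpha\bigl(c^\alpha(f)-b^\alpha(f)\bigr)\phi_h(\cdot-h\alpha),
\]
with
\[
c^\alpha(f)-b^\alpha(f)=\sum_{j\in\mathcal S}a_j(\rho)\bigl(A^{\alpha+j}(f)-\widetilde A^{\alpha+j,Y}(f)\bigr).
\]
Pointwise in $x$, this gives
\[
|Q_hf(x)-\mathcal Q_{h,Q}f(x)|\le B_\phi\,\sup_\alpha|c^\alpha(f)-b^\alpha(f)|.
\]
Here we use (QI1): the sum $\sum_\alpha|\phi_h(x-h\alpha)|=\sum_\alpha|\phi(x/h-\alpha)|$ is bounded by $B_\phi$ after reducing $x/h$ mod $\mathbb Z^d$. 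By (QI2), $\sup_\alpha|c^\alpha-b^\alpha|\le\bigl(\sup_\rho\sum_j|a_j(\rho)|\bigr)\max_{\alpha'}|A^{\alpha'}(f)-\widetilde A^{\alpha',Y}(f)|$, with the maximum over cubes entering the stencil.

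Next we verify the hypotheses of Theorem~\ref{thm:transfer-reference-cubature} for each such cube. (C1) and Theorem~\ref{thm:matching}, applied locally after translating $C_\alpha$ to $[0,h]^d$, give a bijection $\tau^\alpha:Y^\alpha\to Q^\alpha$ of range $2r$. (C2) gives the skeleton with $|\Delta_{s-1}|\ge\eta_0$ and the smallness condition \eqref{eq:small-r-condition}, which is all the proof of that theorem actually uses. Then \eqref{eq:Y-cubature-error} gives $|A^{\alpha'}(f)-\widetilde A^{\alpha',Y}(f)|\le C(d,s,\eta_0)\widetilde h^s\max\|D^\beta f\|_{L_\infty(\widehat C_{\alpha'})}$. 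Since $\widetilde h=h+2r\le 3h$ and $\widehat C_{\alpha'}\subset\widehat U$, this is $\le C h^s\max\|D^\beta f\|_{L_\infty(\widehat U)}$. The uniformity in $\alpha$ comes from the common $\eta_0$ and $\kappa$. This already gives \eqref{eq:QI-perturbed-bound}; the $rh^{s-1}$ term is harmless, and would arise explicitly if one routes through $Q$-cubatures as below.

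For \eqref{eq:QY-to-Q-comparison}, define $\mathcal H_{h,Q}f=\sum_\alpha h^\alpha(f)\phi_h(\cdot-h\alpha)$ with $h^\alpha(f)=\sum_j a_j(\rho)\widetilde A^{\alpha+j,Q}(f)$, where the $Q$-skeleton is $\tau^{\alpha}$ of the $Y$-skeleton. The same pointwise argument bounds $\|\mathcal H_{h,Q}f-\mathcal Q_{h,Q}f\|_\infty$ by $B_\phi\sup_\rho\sum|a_j|\max_{\alpha'}|\widetilde A^{\alpha',Q}(f)-\widetilde A^{\alpha',Y}(f)|$. Estimate \eqref{eq:YQ-cubature-difference} bounds the last factor by $C r\widetilde h^{s-1}\max\|D^\beta f\|\le C r h^{s-1}\max\|D^\beta f\|_{L_\infty(\widehat U)}$, using $\widetilde h\le 3h$.

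The main obstacle is bookkeeping rather than analysis. First, one must check that every cube $\widehat C_{\alpha+j}$ touched by the stencil lies in $\widehat U$, and that (C1)–(C2) are assumed for all of them. Second, the local application of Theorem~\ref{thm:matching} must be legitimate, in particular the containments $Y^\alpha\subset\Thick(C_\alpha)$ and $Q^\alpha\subset C_\alpha$. Third, we need the justification of the $\ell_\infty$ summation bound via $B_\phi$ on all of $\mathbb R^d$ (or on the torus). I would handle these by restricting the sum to the finitely many $\alpha$ whose $\phi_h$-support meets $U$, and by noting that the remainder of the argument never uses the unknown positions of $Q$ beyond the matching range.
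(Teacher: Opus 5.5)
Your proof is correct, but it is organized differently from the paper's.

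\textbf{The paper's route.} It inserts the auxiliary operator $\mathcal H_{h,Q}$ and splits $f-\mathcal Q_{h,Q}f=(f-Q_hf)+(Q_hf-\mathcal H_{h,Q}f)+(\mathcal H_{h,Q}f-\mathcal Q_{h,Q}f)$. The middle term is bounded by the $Q$-cubature error \eqref{eq:Q-cubature-error}, and the last by the transfer estimate \eqref{eq:YQ-cubature-difference}. That last step is where the $rh^{s-1}$ term in \eqref{eq:QI-perturbed-bound} comes from. The same estimate then gives \eqref{eq:QY-to-Q-comparison}, exactly as in your final step.

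\textbf{Your route.} You compare $Q_hf$ with $\mathcal Q_{h,Q}f$ directly through the $Y$-cubature error \eqref{eq:Y-cubature-error}. This gives a sharper statement: \eqref{eq:QI-perturbed-bound} holds with $Ch^s$ alone. It also shows which hypotheses are really needed. The proof of \eqref{eq:Y-cubature-error} uses only exactness and the $\ell_1$ weight bound from the determinant condition on the $Y$-skeleton; it never touches $\tau^\alpha$. So (C1) and the smallness of $r/h$ matter only for the comparison \eqref{eq:QY-to-Q-comparison}. This is natural, since $\mathcal Q_{h,Q}$ is defined from $Y$ alone. You could state this explicitly, rather than verifying the matching before invoking \eqref{eq:Y-cubature-error}.

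\textbf{What the paper's route buys.} It keeps the $Q$-geometry in the chain. As an intermediate result it shows that the unavailable $Q$-based operator itself satisfies $\|f-\mathcal H_{h,Q}f\|_{L_\infty(U)}\le Ch^s\max_{|\beta|=s}\|D^\beta f\|_{L_\infty(\widehat U)}$. You recover the same conclusion by adding your two bounds, which costs only the harmless extra $rh^{s-1}$ term.

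A minor point: your coefficient name $h^\alpha(f)$ collides with the scale $h$; the paper writes $c^{\alpha,Q}(f)$.
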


\begin{proof}
Set
\begin{equation*}
D_s(f):=\max_{|\beta|=s}\|D^\beta f\|_{L_\infty(\widehat U)}.
\end{equation*}
By \textup{(C1)} and Theorem~\ref{thm:matching}, for each relevant \(\alpha\) there is a bijection
\[
\tau^\alpha:Y^\alpha\to Q^\alpha,
\qquad
\|\tau^\alpha(y)-y\|_\infty\le 2r.
\]
This is the matching range needed in Theorem~\ref{thm:transfer-reference-cubature}.  The determinant margin in
\textup{(C2)} is imposed on the reference skeleton in \(Y^\alpha\), and the scale condition in \textup{(C2)} is precisely the
smallness condition required there.  Hence, Theorem~\ref{thm:transfer-reference-cubature} applies uniformly in \(\alpha\).
It gives the cubature on \(Y^\alpha\), the auxiliary cubature on the matched skeleton in \(Q^\alpha\), and the estimates
\begin{equation}\label{eq:local-Q-cubature-error-proof}
|A^\alpha(f)-\widetilde A^{\alpha,Q}(f)|
\le C_Q\widetilde h^sD_s(f),
\end{equation}
\begin{equation}\label{eq:local-YQ-transfer-error-proof}
|\widetilde A^{\alpha,Q}(f)-\widetilde A^{\alpha,Y}(f)|
\le C_{YQ}r\widetilde h^{s-1}D_s(f).
\end{equation}
Here \(\widetilde A^{\alpha,Q}\) is used only in this proof.

Let \(c^\alpha(f)\) be the exact coefficient functional in \eqref{eq:continuous-QI-final}.  Define the auxiliary coefficients
\[
c^{\alpha,Q}(f):=\sum_{j\in\mathcal S}a_j(\rho)\widetilde A^{\alpha+j,Q}(f),
\quad
(\mathcal{H}_{h,Q}f)(x):=\sum_{\alpha\in\mathbb Z^d}c^{\alpha,Q}(f)\phi_h(x-h\alpha).
\]
By \textup{(QI2)} and \eqref{eq:local-Q-cubature-error-proof},
\begin{equation}\label{eq:exact-to-Q-coefficient-proof}
|c^\alpha(f)-c^{\alpha,Q}(f)|
\le
\Bigl(\sup_{0\le\rho\le\rho_0}\sum_{j\in\mathcal S}|a_j(\rho)|\Bigr) C_Q\widetilde h^sD_s(f).
\end{equation}
By \textup{(QI2)} and \eqref{eq:local-YQ-transfer-error-proof},
\begin{equation}\label{eq:Q-to-Y-coefficient-proof}
|c^{\alpha,Q}(f)-b^\alpha(f)|
\le
\Bigl(\sup_{0\le\rho\le\rho_0}\sum_{j\in\mathcal S}|a_j(\rho)|\Bigr) C_{YQ}r\widetilde h^{s-1}D_s(f).
\end{equation}
Using the bounded overlap from \textup{(QI1)}, \eqref{eq:exact-to-Q-coefficient-proof} gives
\begin{equation}\label{eq:ideal-to-auxiliary-Q-proof}
\|Q_hf-\mathcal{H}_{h,Q}f\|_{L_\infty(U)}
\le C_1\widetilde h^sD_s(f),
\end{equation}
and \eqref{eq:Q-to-Y-coefficient-proof} gives
\begin{equation}\label{eq:auxiliary-Q-to-Y-proof}
\|\mathcal{H}_{h,Q}f-\mathcal Q_{h,Q}f\|_{L_\infty(U)}
\le C_2r\widetilde h^{s-1}D_s(f).
\end{equation}
The second estimate is \eqref{eq:QY-to-Q-comparison} after using \(r<h\), as required in \textup{(C2)}.

Finally, \textup{(QI3)} gives
\begin{equation}\label{eq:ideal-QI-error-proof}
\|f-Q_h f\|_{L_\infty(U)}
\le
C_{\mathrm{QI}}h^sD_s(f).
\end{equation}
Combining \eqref{eq:ideal-QI-error-proof}, \eqref{eq:ideal-to-auxiliary-Q-proof}, and
\eqref{eq:auxiliary-Q-to-Y-proof}, and using \(\widetilde h=h+2r\le 3h\), gives
\eqref{eq:QI-perturbed-bound}.  This completes the proof.
\end{proof}

For Theorem~\ref{thm:QI-direct-gamma}, we will instead assume the node set $Q$ is completely known and identify geometric conditions on $Q$ that are sufficient for high-order quasi-interpolation. This leaves the perturbative comparison with $Y$ aside and is not the main focus of the paper, but this theorem clarifies the role of local cubature on $Q$ and gives a useful point of comparison with the quasi-uniform geometry discussed afterward.

\begin{theorem}\label{thm:QI-direct-gamma}
Let \(Q=\bigcup_\alpha Q^\alpha\subset U\) be an arbitrary finite node set. Assume \textup{(QI1)--(QI3)}. Assume also that,
for every coarse cube \(C_\alpha\) entering the stencil construction, the normalized local set satisfies
\begin{equation}\label{eq:gamma-Q-direct}
\Gamma_{s-1}(\overline{\overline{Q^\alpha}})\ge \gamma_0>0.
\end{equation}
Then the quasi-interpolant \(\widetilde Q_h\), obtained by replacing each inflated average by the local
cubature of Theorem~\ref{thm:local-cubature-gamma}, satisfies
\begin{equation}\label{eq:QI-direct-bound}
\begin{aligned}
\|f-\widetilde Q_h f\|_{L_\infty(U)}
&\le
C h^s\max_{|\beta|=s}\|D^\beta f\|_{L_\infty(\widehat U)} \\
&\qquad\text{for all }f\in C^s(\widehat U),
\end{aligned}
\end{equation}
where \(C\) depends only on \(d,s\), the generator \(\phi\), the stencil \(\mathcal{S}\), $\sup_{0\le\rho\le\rho_0}\sum_{j\in\mathcal S}|a_j(\rho)|$, and \(\gamma_0\).
\end{theorem}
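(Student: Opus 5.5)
The plan is to repeat the argument of Theorem~\ref{thm:QI-perturbed-reference} with one difference: the two-step comparison (exact averages, then $Q$-cubature, then $Y$-cubature) collapses to a single step. Here the node set $Q$ is known, so we can use the local cubature of Theorem~\ref{thm:local-cubature-gamma} directly.

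First, for each coarse cube $C_\alpha$ entering the stencil, hypothesis \eqref{eq:gamma-Q-direct} is exactly \eqref{eq:gamma-local}. Theorem~\ref{thm:local-cubature-gamma} then supplies weights $\lambda_{\alpha,\nu}$ on $Q^\alpha$ that are exact on $\Pi_{s-1}(\mathbb R^d)$ for the inflated average $A^\alpha$, with $\sum_\nu|\lambda_{\alpha,\nu}|\le\gamma_0^{-1}$. Write $\widetilde A^{\alpha}(f):=\sum_\nu\lambda_{\alpha,\nu}f(q_{\alpha,\nu})$. By \eqref{eq:cubature-error},
\[
|A^\alpha(f)-\widetilde A^\alpha(f)|\le C(d,s)(1+\gamma_0^{-1})\widetilde h^s D_s(f),
\qquad D_s(f):=\max_{|\beta|=s}\|D^\beta f\|_{L_\infty(\widehat U)},
\]
since $\widehat C_\alpha\subset\widehat U$. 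The constant is uniform in $\alpha$ because $\gamma_0$ is.

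Next, define the coefficients $\widetilde c^\alpha(f):=\sum_{j\in\mathcal S}a_j(\rho)\widetilde A^{\alpha+j}(f)$ and the operator $\widetilde Q_hf:=\sum_\alpha \widetilde c^\alpha(f)\phi_h(\cdot-h\alpha)$. By (QI2),
\[
|c^\alpha(f)-\widetilde c^\alpha(f)|\le A_*\,C(d,s)(1+\gamma_0^{-1})\widetilde h^sD_s(f),
\qquad A_*:=\sup_{0\le\rho\le\rho_0}\sum_{j\in\mathcal S}|a_j(\rho)|.
\]
For each $x$, the bounded overlap in (QI1) gives
\[
|(Q_hf-\widetilde Q_hf)(x)|\le \sup_\alpha|c^\alpha(f)-\widetilde c^\alpha(f)|\sum_\alpha|\phi(x/h-\alpha)|\le B_\phi\sup_\alpha|c^\alpha(f)-\widetilde c^\alpha(f)|.
\]
This bound is uniform over $x\in U$.

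Finally, the triangle inequality with (QI3) gives
\[
\|f-\widetilde Q_hf\|_{L_\infty(U)}\le C_{\rm QI}h^sD_s(f)+B_\phi A_*C(d,s)(1+\gamma_0^{-1})\widetilde h^sD_s(f).
\]
Since $\widetilde h=h+2r\le3h$ for $r<h$ (or $\widetilde h\le(1+2\rho_0)h$ in general), this yields \eqref{eq:QI-direct-bound}. The constant depends only on the quantities listed in the statement.

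The main point needing care is not analytic but a matter of bookkeeping. We must check that every inflated cube used by the stencil lies in $\widehat U$, so that the Taylor bound of Theorem~\ref{thm:local-cubature-gamma} is controlled by $D_s(f)$. We must also check that the lattice sum is finite pointwise, which follows from the compact support of $\phi$; the $\ell_\infty$ bound on the coefficient errors then transfers directly through $B_\phi$.
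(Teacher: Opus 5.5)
Your proposal is correct and follows the same route as the paper. The paper obtains uniform local cubatures and their error bound from Theorem~\ref{thm:local-cubature-gamma}, controls the coefficient errors through \textup{(QI2)}, transfers them to the lattice sum using the bounded overlap in \textup{(QI1)}, and concludes with \textup{(QI3)} and $\widetilde h\le 3h$. You differ only in writing out the constants $A_*$ and $B_\phi$ and the inclusion $\widehat C_\alpha\subset\widehat U$, which the paper leaves implicit when it says the argument of Theorem~\ref{thm:QI-perturbed-reference} repeats.
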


\begin{proof}
Theorem~\ref{thm:local-cubature-gamma} yields local cubature rules on each relevant set \(Q^\alpha\), with a uniform \(\ell_1\)-bound
\eqref{eq:l1-average-bound} depending only on \(\gamma_0\). The resulting local cubature error is bounded by \eqref{eq:cubature-error}.
The proof now repeats the argument in Theorem~\ref{thm:QI-perturbed-reference}: the coefficient errors are controlled by
\textup{(QI2)}, and the final lattice sum is controlled by the bounded overlap in \textup{(QI1)}.
Adding the anchored-grid approximation estimate from \textup{(QI3)} gives \eqref{eq:QI-direct-bound}.
\end{proof}

\begin{corollary}\label{cor:QI-direct-explicit}
The conclusion of Theorem~\ref{thm:QI-direct-gamma} holds whenever, for every coarse cube entering the stencil construction,
at least one of the following local conditions is satisfied uniformly in \(\alpha\):
\begin{enumerate}
\item the normalized local set \(\overline{\overline{Q^\alpha}}\) contains a subset of cardinality \(M=\dim\Pi_{s-1}(\mathbb{R}^d)\) whose
Vandermonde determinant satisfies
\[
|\Delta_{s-1}|\ge \eta_0>0;
\]
\item the metric span satisfies
\[
\omega_{s-1,d}(\overline{\overline{Q^\alpha}})\ge \omega_0>0;
\]
\item the set is finite with minimal separation \(\varepsilon_\alpha=\operatorname{sep}_\infty(\overline{\overline{Q^\alpha}})\) and
\[
\left|\overline{\overline{Q^\alpha}}\right| > M_{s-1,d}(\varepsilon_\alpha).
\]
\end{enumerate}
\end{corollary}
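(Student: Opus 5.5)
The plan is to verify, for each of the three alternative local conditions, the single hypothesis \eqref{eq:gamma-Q-direct} of Theorem~\ref{thm:QI-direct-gamma}: a lower bound $\Gamma_{s-1}(\overline{\overline{Q^\alpha}})\ge\gamma_0>0$, with $\gamma_0$ independent of $\alpha$. Once this is done, Theorem~\ref{thm:QI-direct-gamma} applies verbatim. Its constant depends only on $d,s,\phi,\mathcal S$, the bound on $\sum_j|a_j(\rho)|$, and $\gamma_0$, so uniformity of $\gamma_0$ is the only thing to track.

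For condition (1), I would invoke Proposition~\ref{prop:det-gamma}. The monotonicity $\Gamma_{s-1}(Z)\ge\Gamma_{s-1}(Z')$ for $Z'\subset Z$ gives
\[
\Gamma_{s-1}(\overline{\overline{Q^\alpha}})\ge \eta_0/(M\,M!)=:\gamma_0 .
\]

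For condition (2), I would invoke Proposition~\ref{prop:metric-span-gamma}, which gives $\Gamma_{s-1}\ge R_{s-1,d}(\omega_{s-1,d}(\overline{\overline{Q^\alpha}}))^{-1}$. The uniform constant $\gamma_0=R_{s-1,d}(\omega_0)^{-1}$ then follows once we observe that $R_{s-1,d}(\omega)$ is nonincreasing in $\omega\in(0,1]$. This holds because:
\begin{itemize}
\item the argument $\frac{1+(1-\omega)^{1/d}}{1-(1-\omega)^{1/d}}$ decreases in $\omega$ and stays $\ge1$;
\item $T_{s-1}$ is increasing on $[1,\infty)$.
\end{itemize}
A one-line remark covers this monotonicity. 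It also covers the fact that $\omega_{s-1,d}\le1$ on $\Qone$ in the normalization used, or else the proposition is simply applied with $\omega_0$ in its admissible range.

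For condition (3), Corollary~\ref{cor:finite-criterion} gives $\omega_{s-1,d}(\overline{\overline{Q^\alpha}})>0$, and hence a positive $\Gamma_{s-1}$ for each $\alpha$. The main obstacle is uniformity in $\alpha$: the corollary alone yields positivity but no quantitative floor. I would read ``uniformly in $\alpha$'' as requiring a fixed margin, namely
\[
\inf_\alpha \omega_{s-1,d}(\overline{\overline{Q^\alpha}})\ge\omega_0>0 .
\]
This reduces case (3) to case (2). If the paper only has finitely many relevant cubes, as in a bounded $U$ at fixed $h$, the minimum over $\alpha$ is automatically positive. Making the constant $h$-independent, however, needs the uniform reading. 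To get it quantitatively, I would take $\varepsilon=\varepsilon_\alpha$ in \eqref{eq:metric-span-def}. Since points are $\varepsilon_\alpha$-separated in $\ell_\infty$, an $\varepsilon$-covering of a slightly smaller scale still needs about $|Z|$ balls. This gives
\[
\omega_{s-1,d}\gtrsim \varepsilon_\alpha^d\bigl(|Z|-M_{s-1,d}(\varepsilon_\alpha)\bigr),
\]
which is uniformly bounded below when the excess in (3) is uniform. Finally, I would combine the three cases by taking $\gamma_0$ as the minimum of the three constants and conclude with Theorem~\ref{thm:QI-direct-gamma}.
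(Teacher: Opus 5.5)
Your proposal is correct and follows the paper's proof, which likewise verifies \eqref{eq:gamma-Q-direct} case by case via Proposition~\ref{prop:det-gamma}, Proposition~\ref{prop:metric-span-gamma} and Corollary~\ref{cor:finite-criterion}, and then invokes Theorem~\ref{thm:QI-direct-gamma}; your monotonicity detour in case (2) is harmless but unnecessary, since Proposition~\ref{prop:metric-span-gamma} already gives $R_{s-1,d}(\omega_0)^{-1}$ directly from the hypothesis $\omega_{s-1,d}\ge\omega_0$. Your caution in case (3) is justified. The paper simply asserts a uniform bound there, while Corollary~\ref{cor:finite-criterion} only gives positivity cube by cube, so a quantitative reading such as $\inf_\alpha\omega_{s-1,d}(\overline{\overline{Q^\alpha}})\ge\omega_0$ is genuinely needed. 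Note, however, that your sketch $\omega_{s-1,d}\gtrsim\varepsilon_\alpha^d\bigl(|Z|-M_{s-1,d}(\varepsilon_\alpha)\bigr)$ gives a uniform floor only if the whole product is bounded below in $\alpha$, not merely the excess. That product must also be evaluated at the slightly smaller scale where separation forces $M(Z,\varepsilon)=|Z|$.
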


\begin{proof}
In case \textup{(1)}, Proposition~\ref{prop:det-gamma} gives a uniform lower bound for \(\Gamma_{s-1}(\overline{\overline{Q^\alpha}})\).
In case \textup{(2)}, Proposition~\ref{prop:metric-span-gamma} gives such a lower bound.
In case \textup{(3)}, Corollary~\ref{cor:finite-criterion} gives such a lower bound.
The conclusion then follows from Theorem~\ref{thm:QI-direct-gamma}.
\end{proof}

\subsection{Relation with quasi-uniformity}

We briefly compare the direct hypothesis \eqref{eq:gamma-Q-direct} with the classical quasi-uniform setting.
The comparison is local.
It is enough to control the normalized fill distance of the local sample in each inflated cube; a separate lower bound on the
separation is not needed for the cubature step itself.  This shows that quasi-uniformity is sufficient for our hypothesis, but stronger than necessary.

For a finite set \(Z\subset\Qone\), define its normalized fill distance by
\[
h_\infty(Z,\Qone):=\sup_{x\in\Qone}\min_{z\in Z}\|x-z\|_\infty.
\]

\begin{proposition}\label{prop:fill-gamma}
Let \(Z\subset\Qone\) be finite. Then
\[
\Gamma_{s-1}(Z)\ge 1-\mathfrak{M}(d,s)\,h_\infty(Z,\Qone),
\]
where \(\mathfrak{M}(d,s)\) is the Markov constant from Lemma~\ref{lem:gamma-perturb}.
In particular, if
\[
h_\infty(Z,\Qone)\le \frac{1}{2\mathfrak{M}(d,s)},
\]
then
\[
\Gamma_{s-1}(Z)\ge \frac12.
\]
\end{proposition}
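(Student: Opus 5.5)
The plan is to argue directly from Definition~\ref{def:Gamma} with the same Markov-type estimate used in the proof of Lemma~\ref{lem:gamma-perturb}, treating $Z$ as a discrete perturbation of the whole cube $\Qone$. The norming margin of the full cube is trivially $1$: for $\|p\|_{L_\infty(\Qone)}=1$ the supremum over $\Qone$ equals $1$. The fill distance plays the role of $\delta$ in Lemma~\ref{lem:gamma-perturb}, except that instead of a bijection we only need a map $x\mapsto z(x)$ from $\Qone$ into $Z$ with $\|x-z(x)\|_\infty\le h_\infty(Z,\Qone)$. Such a map exists by the definition of fill distance, since $Z$ is finite and the inner minimum is attained.

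Concretely, I would fix $p\in\Pi_{s-1}(\mathbb{R}^d)$ with $\|p\|_{L_\infty(\Qone)}=1$. By compactness there is a point $x_*\in\Qone$ with $|p(x_*)|=1$. I would then choose $z\in Z$ with $\|x_*-z\|_\infty\le h_\infty(Z,\Qone)$. The segment $[x_*,z]$ lies in $\Qone$ by convexity, so the mean value theorem gives
\[
|p(x_*)-p(z)|\le \|\nabla p\|_{L_\infty(\Qone)}\,\|x_*-z\|_\infty ,
\]
where the gradient is measured in the norm dual to $\ell_\infty$ (that is, the $\ell_1$ norm of $\nabla p$). By the Markov inequality invoked in Lemma~\ref{lem:gamma-perturb}, this is at most $\mathfrak{M}(d,s)\,h_\infty(Z,\Qone)$. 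Hence
\[
\max_{z'\in Z}|p(z')|\ge |p(z)|\ge 1-\mathfrak{M}(d,s)\,h_\infty(Z,\Qone).
\]
Taking the infimum over all admissible $p$ gives the first claim. The second claim follows by substituting $h_\infty\le 1/(2\mathfrak{M}(d,s))$.

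The only delicate point is the norm convention for $\mathfrak{M}(d,s)$. The constant must bound the Lipschitz constant of $p$ with respect to $\|\cdot\|_\infty$, i.e.\ $\sup\|\nabla p\|_1$, and this is exactly how it is used in Lemma~\ref{lem:gamma-perturb}. I would therefore state that $\mathfrak{M}(d,s)$ is understood in that sense, for instance $d(s-1)^2$ from the univariate Markov inequality applied coordinatewise on $[-1,1]$. If the lemma's constant were defined differently, I would simply enlarge it by a factor $d$. No other obstacle is expected.
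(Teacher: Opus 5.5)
Your argument is correct and is essentially the paper's proof: fix $p$ with $\|p\|_{L_\infty(\Qone)}=1$, take a point $x_*$ where $|p(x_*)|=1$, pick $z\in Z$ within the fill distance of $x_*$, and apply the Markov bound from Lemma~\ref{lem:gamma-perturb}. Your remark that $\mathfrak{M}(d,s)$ must bound the Lipschitz constant with respect to $\|\cdot\|_\infty$ (for example $d(s-1)^2$) is a helpful clarification that the paper leaves implicit.
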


\begin{proof}
Let \(p\in\Pi_{s-1}(\mathbb{R}^d)\) satisfy \(\|p\|_{L_\infty(\Qone)}=1\). Choose \(x_\ast\in\Qone\) such that \(|p(x_\ast)|=1\).
By definition of \(h_\infty(Z,\Qone)\), there exists \(z\in Z\) with
\[
\|x_\ast-z\|_\infty\le h_\infty(Z,\Qone).
\]
By the Markov brothers' inequality on \(\Qone\),
\[
|p(z)-p(x_\ast)|\le \mathfrak{M}(d,s)\,h_\infty(Z,\Qone).
\]
Hence
\[
|p(z)|\ge 1-\mathfrak{M}(d,s)\,h_\infty(Z,\Qone),
\]
and therefore
\[
\max_{\zeta\in Z}|p(\zeta)|\ge 1-\mathfrak{M}(d,s)\,h_\infty(Z,\Qone).
\]
Taking the infimum over all such \(p\) proves the claim.
\end{proof}

\begin{corollary}\label{cor:quasi-uniform-implies-gamma}
Assume that, for every coarse cube entering the stencil construction, the normalized local sample \(\overline{\overline{Q^\alpha}}\subset\Qone\) satisfies
\[
h_\infty(\overline{\overline{Q^\alpha}},\Qone)\le \theta_0,
\qquad
\theta_0<\frac{1}{\mathfrak{M}(d,s)}.
\]
Then
\[
\Gamma_{s-1}(\overline{\overline{Q^\alpha}})\ge 1-\mathfrak{M}(d,s)\theta_0>0
\]
uniformly in \(\alpha\), and therefore Theorem~\ref{thm:QI-direct-gamma} applies.
In particular, this covers the standard quasi-uniform regime whenever the local nodes fill each inflated cube at scale comparable to \(h\).
\end{corollary}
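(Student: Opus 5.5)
The corollary is a direct combination of Proposition~\ref{prop:fill-gamma} with Theorem~\ref{thm:QI-direct-gamma}. The plan is to first obtain a uniform norming margin on every normalized local sample. Then we feed that margin into the direct quasi-interpolation theorem with $\gamma_0:=1-\mathfrak{M}(d,s)\theta_0$.

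\textbf{Step 1 (uniform norming margin).} Fix a coarse cube $C_\alpha$ entering the stencil construction. Apply Proposition~\ref{prop:fill-gamma} to $Z=\overline{\overline{Q^\alpha}}\subset\Qone$. This gives
\[
\Gamma_{s-1}(\overline{\overline{Q^\alpha}})\ge 1-\mathfrak{M}(d,s)\,h_\infty(\overline{\overline{Q^\alpha}},\Qone)\ge 1-\mathfrak{M}(d,s)\theta_0 ,
\]
since the right-hand side is decreasing in the fill distance. The assumption $\theta_0<1/\mathfrak{M}(d,s)$ makes this quantity strictly positive. It depends only on $d,s,\theta_0$, so it is uniform in $\alpha$.

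\textbf{Step 2 (apply the direct theorem).} With $\gamma_0:=1-\mathfrak{M}(d,s)\theta_0>0$, hypothesis \eqref{eq:gamma-Q-direct} holds for every relevant $\alpha$. Theorem~\ref{thm:QI-direct-gamma} (under (QI1)--(QI3), which are implicitly retained) then yields \eqref{eq:QI-direct-bound}. Its constant depends on $\theta_0$ only through $\gamma_0$.

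\textbf{Step 3 (the quasi-uniform remark).} Suppose the nodes of $Q^\alpha$ have physical $\ell_\infty$ fill distance at most $c\,h$ in $\widehat C_\alpha$. The normalization $S^\alpha$ scales distances by $2/\widetilde h$ with $\widetilde h\ge h$, so the normalized fill distance is at most $2c$. Hence the hypothesis holds whenever $2c<1/\mathfrak{M}(d,s)$.

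Each step is immediate. The only point needing care is the quasi-uniform remark. There, "scale comparable to $h$" must be read as a fill distance that is a small enough multiple of $h$, namely below $\widetilde h/(2\mathfrak{M}(d,s))$, rather than as an arbitrary multiple of $h$.
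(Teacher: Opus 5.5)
Your proposal is correct and takes the same route as the paper. The paper's proof simply says the result is immediate from Proposition~\ref{prop:fill-gamma}; that is your Step~1, and your Step~2 then applies Theorem~\ref{thm:QI-direct-gamma} with $\gamma_0=1-\mathfrak{M}(d,s)\theta_0$. Your Step~3 adds a useful precision the paper leaves informal: ``comparable to $h$'' must mean a uniform physical fill distance below $\widetilde h/(2\mathfrak{M}(d,s))$.
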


\begin{proof}
This is immediate from Proposition~\ref{prop:fill-gamma}.
\end{proof}

\begin{proposition}\label{prop:not-converse-QU}
For every \(s\) and \(d\), there exist families of finite samples \(Q\subset U\) which satisfy the hypothesis
\eqref{eq:gamma-Q-direct} of Theorem~\ref{thm:QI-direct-gamma} with one uniform constant \(\gamma_0>0\), but are not quasi-uniform.
\end{proposition}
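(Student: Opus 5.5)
We will construct an explicit family directly, using the fact that \eqref{eq:gamma-Q-direct} is monotone under enlarging the local set ($Z_1\subset Z_2$ implies $\Gamma_{s-1}(Z_2)\ge\Gamma_{s-1}(Z_1)$) and that adding points can only destroy quasi-uniformity. Fix once and for all a set $Z_0\subset\Qone$ of $M=\dim\Pi_{s-1}(\mathbb{R}^d)$ points, for instance a principal lattice (simplex grid) of degree $s-1$ shrunk into $[-1/2,1/2]^d$. Such a set is unisolvent, so $|\Delta_{s-1}(Z_0)|=:\eta_0>0$. By Proposition~\ref{prop:det-gamma}, $\Gamma_{s-1}(Z_0)\ge \eta_0/(M\,M!)=:\gamma_0$, a constant depending only on $d,s$.

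For each $h$ in a sequence $h\to0$, and each coarse cube $C_\alpha$ of the partition of $U$, we place the local set $Q^\alpha$ as follows. First put the image $(S^\alpha)^{-1}(Z_0)$ of the skeleton; since $Z_0\subset[-1/2,1/2]^d$ and $\widetilde h/2\cdot\frac12< h/2$ for $r<h/2$, these points lie in $C_\alpha$. Then add one further cluster of extra points in a single cube, say $C_{\alpha_0}$: a pair of points at distance $\varepsilon(h)=h^2$ (or any $\varepsilon(h)=o(h)$), or more generally a tight cluster. By monotonicity of $\Gamma_{s-1}$, every normalized local set still satisfies $\Gamma_{s-1}(\overline{\overline{Q^\alpha}})\ge\gamma_0$ uniformly in $\alpha$ and $h$. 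Thus \eqref{eq:gamma-Q-direct} holds with one constant.

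It remains to check failure of quasi-uniformity, i.e. that the mesh ratio $h_{\max}(Q)/q_{\rm sep}(Q)$ is unbounded along the family. The fill distance of $Q$ is at least of order $h$: the skeletons avoid the boundary layer $C_\alpha\setminus x^\alpha+[-h/4,h/4]^d$, and in fact at least $c h$ with $c=c(d,s)>0$. The separation, by contrast, is at most $\varepsilon(h)=h^2$. Hence the ratio is at least $c/h\to\infty$. If one prefers a failure driven by fill distance rather than separation, one can alternatively refine the point count only in some cubes (adding a fine grid of spacing $h^2$ there) while keeping other cubes with only $M$ points. Then the global separation is $O(h^2)$ while the fill distance stays $\gtrsim h$, and the ratio again blows up. Either version shows the family is not quasi-uniform while the hypothesis of Theorem~\ref{thm:QI-direct-gamma} holds uniformly.

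The main obstacle is bookkeeping rather than analysis. We must make sure the skeleton points lie in $C_\alpha$ rather than merely in $\widehat C_\alpha$, which requires $r/h$ to be bounded away from $1/2$ or $Z_0$ to be shrunk by a factor like $h/\widetilde h$. Shrinking only changes $\eta_0$ by a fixed factor when $r/h\le\rho_0$. We must also state precisely which definition of quasi-uniformity is negated. I will take the standard one: bounded ratio of fill distance to separation distance, uniformly in the family.
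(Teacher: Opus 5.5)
Your proposal is correct and follows essentially the same route as the paper: a fixed well-conditioned skeleton in every coarse cube gives a uniform lower bound on $\Gamma_{s-1}$, by monotonicity under adding points. Inserting a close pair into one cube then drives the separation to zero while the fill distance stays at the coarse scale. You index the family by $h\to0$ with pair distance $h^2$, whereas the paper keeps $h$ fixed and lets the pair distance tend to zero; this difference is immaterial. One small slip: for $r>0$ the skeleton lies in $x^\alpha+[-\widetilde h/4,\widetilde h/4]^d$, not in $x^\alpha+[-h/4,h/4]^d$. The bound (fill distance) $\gtrsim h$ still holds by a counting argument: only $O(M\,h^{-d})$ points are present, so they cannot cover $U$ at a scale finer than $c(d,s,U)\,h$.
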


\begin{proof}
Fix, in each coarse cube \(C_\alpha\), a set \(Y^\alpha\subset C_\alpha\) such that the normalized set \(S^\alpha(Y^\alpha)\subset\Qone\) has
\[
\Gamma_{s-1}(S^\alpha(Y^\alpha))\ge \gamma_\ast>0.
\]
For instance, one may take the same well-conditioned reference pattern in every coarse cube.
Now define
\[
Q^\alpha:=Y^\alpha\cup G^\alpha,
\]
where \(G^\alpha\subset C_\alpha\) is any additional finite set. Since $Y^\alpha$ is a well-conditioned skeleton for $Q^\alpha$, 
\[
\Gamma_{s-1}(\overline{\overline{Q^\alpha}})
\ge
\Gamma_{s-1}(S^\alpha(Y^\alpha))
\ge \gamma_\ast.
\]
Thus the direct hypothesis \eqref{eq:gamma-Q-direct} holds with \(\gamma_0=\gamma_\ast\), independently of the choice of the extra points.

Choose now one coarse cube and sequentially insert into it two distinct points whose mutual distance tends to zero.
Then the global separation distance of \(Q\) tends to zero, while the fill distance of the whole sample in \(U\) remains bounded by the original coarse scale.
Hence, the quasi-uniformity ratio becomes arbitrarily large as $N\to\infty$.
Therefore these samples are not quasi-uniform, although the local lower bound for \(\Gamma_{s-1}\) remains unchanged.
\end{proof}

\begin{remark}\label{rem:wendland-viewpoint}
Theorem~\ref{thm:QI-direct-gamma} and Corollary~\ref{cor:QI-direct-explicit} do not impose a global lower bound on the separation distance. The metric-span and finite separation-cardinality conditions provide a non-perturbative replacement for quasi-uniformity. This must be interpreted correctly. In general, there are two separate length scales: one is defined by the fill distance, while the other one is associated with pairwise minimal separation distance. A quasi-uniformity assumption requires that these scales are commensurate uniformly in $N$. The order of the resulting estimates is given as a power of the fill distance. This is true for the present approach as well. The contrast between the two methods is that in the quasi-uniform setting the fill distance is vanishing as $N\to\infty$, with the rate determined by a "microscale" associated with the separation. In the present approach, the fill distance may be on the order of $r$, which may be as large as $\theta h$ for $\theta\in (0, 1)$.  The minimal separation is not restricted, at least not on the entire sample. The allowed geometries are reasonably general, but the estimates are written in terms of a larger scale parameter $h \sim r$. In contrast, in the quasi-uniform geometry, the size of $h$ is on the order of the minimal separation. 
\end{remark}

\section{Conclusion}
Given a disordered node set $Q$, we showed that a relatively small family of inequalities based on ordered mesoscopic cube populations provide sufficient conditions for the existence of an \(O(r)\)-range perfect matching from $Q$ to a reference node set $Y$. Using the matching, we established stability of cubature based on $Q$ provided the cubature based on $Y$ is stable. Combined with translation-invariant quasi-interpolation on a coarse grid, these results give high-order approximation estimates under the scale separation condition \(r\le C h\), where the coarse grid scale $h$ determines the approximation order and the mesoscopic scale $r$ controls the matching range and size of the inflated cubes.

\section*{Acknowledgment}
The work of Alexander Panchenko was supported in part by a grant from W. M. Keck Foundation.


\newpage
\bibliographystyle{amsplain}
\bibliography{references}

\providecommand{\bysame}{\leavevmode\hbox to3em{\hrulefill}\thinspace}
\providecommand{\MR}{\relax\ifhmode\unskip\space\fi MR }
\providecommand{\MRhref}[2]{%
  \href{http://www.ams.org/mathscinet-getitem?mr=#1}{#2}
}
\providecommand{\href}[2]{#2}
\begin{thebibliography}{10}

\bibitem{BOLLOBAS199147}
B.~Bollobás and I.~Leader, \emph{Compressions and isoperimetric inequalities},
  Journal of Combinatorial Theory, Series A \textbf{56} (1991), no.~1, 47--62.

\bibitem{BrudnyiYomdin2016}
A.~Brudnyi and Y.~Yomdin, \emph{Norming sets and related {Remez}-type
  inequalities}, J. Aust. Math. Soc. \textbf{100} (2016), 163--181.

\bibitem{ChuiDiamond1990}
C.~K. Chui and H.~Diamond, \emph{A characterization of multivariate
  quasi-interpolation formulas and its applications}, Numerische Mathematik
  \textbf{57} (1990), 105--121.

\bibitem{DeBoorFix73}
C.~de~Boor and G.~J. Fix, \emph{Spline approximation by quasiinterpolants}, J.\
  Approx.\ Theory \textbf{8} (1973), no.~1, 19--45.

\bibitem{DickKuoSloan2013}
J.~Dick, F.~Y. Kuo, and I.~H. Sloan, \emph{High-dimensional integration: The
  quasi-{Monte Carlo} way}, Acta Numerica \textbf{22} (2013), 133--288.

\bibitem{DickPillichshammer2010}
J.~Dick and F.~Pillichshammer, \emph{Digital nets and sequences: Discrepancy
  theory and quasi--monte carlo integration}, Cambridge University Press, 2014.

\bibitem{Dudley1968}
R.~M. Dudley, \emph{Distances of probability measures and random variables},
  The Annals of Mathematical Statistics \textbf{39} (1968), no.~5, 1563--1572.

\bibitem{Faure1982}
H.~Faure, \emph{Discr{\'e}pance de suites associ{\'e}es {\`a} un syst{\`e}me de
  num{\'e}ration (en dimension {$s$})}, Acta Arithmetica \textbf{41} (1982),
  no.~4, 337--351.

\bibitem{Hall1935}
P.~Hall, \emph{On representatives of subsets}, Journal of the London
  Mathematical Society \textbf{s1-10} (1935), no.~1, 26--30.

\bibitem{HardyLittlewoodPolya}
G.~H. Hardy, J.~E. Littlewood, and G.~P\'olya, \emph{Inequalities}, 2 ed.,
  Cambridge University Press, Cambridge, 1952.

\bibitem{Jia2003}
R.-Q. Jia, \emph{Convergence rates of cascade algorithms}, Proc.\ Amer.\ Math.\
  Soc. \textbf{131} (2003), no.~6, 1739--1749.

\bibitem{Jia2004}
\bysame, \emph{Approximation with scaled shift-invariant spaces by means of
  quasi-projection operators}, J.\ Approx.\ Theory \textbf{131} (2004), no.~1,
  30--46.

\bibitem{Jia2010}
\bysame, \emph{Approximation by quasi-projection operators in {Besov} spaces},
  J.\ Approx.\ Theory \textbf{162} (2010), no.~1, 186--200.

\bibitem{LeiJia1997}
J.~Lei, R.-Q. Jia, and E.~W. Cheney, \emph{Approximation from shift-invariant
  spaces by integral operators}, SIAM J.\ Math.\ Anal. \textbf{28} (1997),
  no.~2, 481--498.

\bibitem{LevRudnev2018}
V.~Lev and M.~Rudnev, \emph{Minimising the sum of projections of a finite set},
  Discrete \& Computational Geometry \textbf{60} (2018), no.~2, 493--511.

\bibitem{loomiswhitney1949}
L.~H. Loomis and H.~Whitney, \emph{An inequality related to the isoperimetric
  inequality}, Bulletin of the American Mathematical Society \textbf{55}
  (1949), 961--962.

\bibitem{LycheSchumaker1975}
T.~Lyche and L.~L. Schumaker, \emph{Local spline approximation methods},
  Journal of Approximation Theory \textbf{15} (1975), no.~4, 294--325.

\bibitem{MarshallOlkinArnold}
A.~W. Marshall, I.~Olkin, and B.~C. Arnold, \emph{Inequalities: Theory of
  majorization and its applications}, 2 ed., Springer Series in Statistics,
  Springer, New York, 2011.

\bibitem{Matousek1999}
J.~Matou{\v{s}}ek, \emph{Geometric discrepancy: An illustrated guide},
  Springer, Berlin, 1999.

\bibitem{Santambrogio2015}
F.~Santambrogio, \emph{Optimal transport for applied mathematicians: Calculus
  of variations, {PDE}s, and modeling}, Birkh{\"a}user, Basel, 2015.

\bibitem{Sobol1967}
I.~M. Sobol', \emph{On the distribution of points in a cube and the approximate
  evaluation of integrals}, USSR Computational Mathematics and Mathematical
  Physics \textbf{7} (1967), no.~4, 86--112.

\bibitem{StrangFix73}
G.~Strang and G.~Fix, \emph{A {Fourier} analysis of the finite element
  variational method}, pp.~793--840, Springer Berlin, Heidelberg, 2011.

\bibitem{Strassen1965}
V.~Strassen, \emph{{The Existence of Probability Measures with Given
  Marginals}}, The Annals of Mathematical Statistics \textbf{36} (1965), no.~2,
  423 -- 439.

\bibitem{veomett2012}
E.~Veomett and A.~J. Radcliffe, \emph{Vertex isoperimetric inequalities for a
  family of graphs on ${Z}^k$}, The Electronic Journal of Combinatorics
  \textbf{19} (2012), no.~2.

\bibitem{Villani2009}
C.~Villani, \emph{Optimal transport: Old and new}, Grundlehren der
  mathematischen Wissenschaften, Springer Berlin Heidelberg, 2008.

\bibitem{NiederreiterXing1995}
C.~Xing and H.~Niederreiter, \emph{A construction of low-discrepancy sequences
  using global function fields}, Acta Arithmetica \textbf{73} (1995), no.~1,
  87--102.

\bibitem{Yomdin2011}
Y.~Yomdin, \emph{Remez-type inequality for discrete sets}, Israel J. Math.
  \textbf{186} (2011), no.~1, 45--60.

\end{thebibliography}

\end{document}